\newtheorem{theorem}{Theorem}
\theoremstyle{plain}
\newtheorem{corollary}{Corollary}
\newtheorem{definition}{Definition}
\newtheorem{example}{Example}
\newtheorem{lemma}{Lemma}
\newtheorem{remark}{Remark}
\numberwithin{equation}{section}
\begin{document}
\title{ANTI-INVARIANT RIEMANNIAN SUBMERSIONS FROM COSYMPLECTIC MANIFOLDS}
\author{C. Murathan}
\address{Art and Science Faculty,Department of Mathematics, Uludag
University, 16059 Bursa, TURKEY}
\email{cengiz@uludag.edu.tr}
\author{I. K\"{u}peli Erken}
\address{Art and Science Faculty,Department of Mathematics, Uludag
University, 16059 Bursa, TURKEY}
\email{iremkupeli@uludag.edu.tr}
\date{20.02.2013}
\subjclass[2010]{Primary 53C25, 53C43, 53C55; Secondary 53D15}
\keywords{Riemannian submersion, Cosymplectic manifold, Anti-invariant
submersion This paper is supported by Uludag University research project
(KUAP(F)-2012/57).}

\begin{abstract}
We introduce anti-invariant Riemannian submersions from cosymplectic
manifolds onto Riemannian manifolds. We survey main results of
anti-invariant Riemannian submersions defined on cosymplectic manifolds. We
investigate necessary and sufficient condition for an anti-invariant
Riemannian submersion to be totally geodesic and harmonic. We give examples
of anti-invariant submersions such that characteristic vector field $\xi $
is vertical or horizontal. Moreover we give decomposition theorems by using
the existence of anti-invariant Riemannian submersions.
\end{abstract}

\maketitle

\section{\textbf{Introduction}}

In \cite{BO1}, O'Neill defined a Riemannian submersion, which is the
\textquotedblleft dual\textquotedblright\ notion of isometric immersion, and
obtained some fundamental equations corresponding to those in Riemannian
submanifold geometry, that is, Gauss, Codazzi and Ricci equations. We have
also the following submersions:

semi-Riemannian submersion and Lorentzian submersion \cite{FAL}, Riemannian
submersion (\cite{GRAY}), slant submersion (\cite{CHEN}, \cite{SAHIN1}),
almost Hermitian submersion \cite{WAT}, contact-complex submersion \cite%
{IANUS}, quaternionic submersion \cite{IANUS2}, almost $h$-slant submersion
and $h$-slant submersion \cite{PARK1}, semi-invariant submersion \cite%
{SAHIN2}, $h$-semi-invariant submersion \cite{PARK2}, etc. As we know,
Riemannian submersions are related with physics and have their applications
in the Yang-Mills theory (\cite{BL}, \cite{WATSON}), Kaluza-Klein theory (%
\cite{BOL}, \cite{IV}), Supergravity and superstring theories (\cite{IV2}, 
\cite{MUS}). In \cite{SAHIN}, Sahin introduced anti-invariant Riemannian
submersions from almost Hermitian manifolds onto Riemannian manifolds.

In this paper we consider anti-invariant Riemannian submersions from
cosymplectic manifolds. The paper is organized as follows: In section 2, we
present the basic information about Riemannian submersions needed for this
paper. In section 3, we mention about cosymplectic manifolds. In section 4,
we give definition of anti-invariant Riemannian submersions and introduce
anti-invariant Riemannian submersions from cosymplectic manifolds onto
Riemannian manifolds. We survey main results of anti-invariant submersions
defined on cosymplectic manifolds. We give examples of anti-invariant
submersions such that characteristic vector field $\xi $ is vertical or
horizontal. Moreover we give decomposition theorems by using the existence
of anti-invariant Riemannian submersions and observe that such submersions
put some restrictions on the geometry of the total manifold.

\section{\textbf{Riemannian Submersions}}

In this section we recall several notions and results which will be needed
throughout the paper.

Let $(M,g_{M})$ be an $m$-dimensional Riemannian manifold , let $(N,g_{N})$
be an $n$-dimensional Riemannian manifold. A Riemannian submersion is a
smooth map $F:M\rightarrow N$ which is onto and satisfies the following
three axioms:

$S1$. $F$ has maximal rank.

$S2$. The differential $F_{\ast }$ preserves the lenghts of horizontal
vectors.

The fundamental tensors of a submersion were defined by O'Neill (\cite{BO1},%
\cite{BO2}). They are $(1,2)$-tensors on $M$, given by the formula:%
\begin{eqnarray}
\mathcal{T}(E,F) &=&\mathcal{T}_{E}F=\mathcal{H}\nabla _{\mathcal{V}E}%
\mathcal{V}F+\mathcal{V}\nabla _{\mathcal{V}E}\mathcal{H}F,  \label{AT1} \\
\mathcal{A}(E,F) &=&\mathcal{A}_{E}F=\mathcal{V}\nabla _{\mathcal{H}E}%
\mathcal{H}F+\mathcal{H}\nabla _{\mathcal{H}E}\mathcal{V}F,  \label{AT2}
\end{eqnarray}%
for any vector field $E$ and $F$ on $M.$ Here $\nabla $ denotes the
Levi-Civita connection of $(M,g_{M})$. These tensors are called
integrability tensors for the Riemannian submersions. Note that we denote
the projection morphism on the distributions ker$F_{\ast }$ and (ker$F_{\ast
})^{\perp }$ by $\mathcal{V}$ and $\mathcal{H},$ respectively.The following
\ Lemmas are well known (\cite{BO1},\cite{BO2}).

\begin{lemma}
For any $U,W$ vertical and $X,Y$ horizontal vector fields, the tensor fields 
$\mathcal{T}$, $\mathcal{A}$ satisfy:%
\begin{eqnarray}
i)\mathcal{T}_{U}W &=&\mathcal{T}_{W}U,  \label{TUW} \\
ii)\mathcal{A}_{X}Y &=&-\mathcal{A}_{Y}X=\frac{1}{2}\mathcal{V}\left[ X,Y%
\right] .  \label{TUW2}
\end{eqnarray}
\end{lemma}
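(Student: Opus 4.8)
The plan is to exploit the fact, noted just above, that $\mathcal{T}$ and $\mathcal{A}$ are $(1,2)$-tensors, so that each identity may be checked by a pointwise computation with conveniently chosen representative vector fields, and then to reduce everything to the defining formulas (\ref{AT1})--(\ref{AT2}) together with the torsion-freeness and metric-compatibility of the Levi-Civita connection $\nabla$.

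For part $i)$ I would first observe that, since $U,W$ are vertical, we have $\mathcal{V}U=U$, $\mathcal{H}U=0$ and likewise $\mathcal{V}W=W$, $\mathcal{H}W=0$; hence the second summand in (\ref{AT1}) drops out and $\mathcal{T}_{U}W=\mathcal{H}\nabla_{U}W$, $\mathcal{T}_{W}U=\mathcal{H}\nabla_{W}U$. Subtracting and using that $\nabla$ is torsion-free gives $\mathcal{T}_{U}W-\mathcal{T}_{W}U=\mathcal{H}[U,W]$. The decisive point is that the vertical distribution $\ker F_{\ast}$ is integrable, being the collection of tangent spaces to the fibres $F^{-1}(q)$, so $[U,W]$ is again vertical and $\mathcal{H}[U,W]=0$, which yields $\mathcal{T}_{U}W=\mathcal{T}_{W}U$.

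For part $ii)$ the analogous reduction gives $\mathcal{A}_{X}Y=\mathcal{V}\nabla_{X}Y$ and $\mathcal{A}_{Y}X=\mathcal{V}\nabla_{Y}X$ for horizontal $X,Y$, whence $\mathcal{A}_{X}Y-\mathcal{A}_{Y}X=\mathcal{V}[X,Y]$ by torsion-freeness. It then remains to establish the skew-symmetry $\mathcal{A}_{X}Y=-\mathcal{A}_{Y}X$, equivalently $\mathcal{V}(\nabla_{X}Y+\nabla_{Y}X)=0$; this is the real content and the step I expect to be the main obstacle. Using tensoriality I would assume $X,Y$ to be basic and compute $g_{M}(\nabla_{X}Y,V)$ for an arbitrary vertical $V$ by means of the Koszul formula. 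The terms $X\,g_{M}(Y,V)$ and $Y\,g_{M}(X,V)$ vanish because horizontal and vertical fields are orthogonal; the term $V\,g_{M}(X,Y)$ vanishes because $g_{M}(X,Y)$ is constant along the fibres for basic fields (a consequence of axiom $S2$); and the two remaining bracket contributions $g_{M}([Y,V],X)$ and $g_{M}([V,X],Y)$ vanish because $[X,V]$ is vertical whenever $X$ is basic and $V$ is vertical.

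What survives is $2\,g_{M}(\nabla_{X}Y,V)=g_{M}([X,Y],V)$ for all vertical $V$, that is $\mathcal{V}\nabla_{X}Y=\tfrac{1}{2}\mathcal{V}[X,Y]$, which is precisely $\mathcal{A}_{X}Y=\tfrac{1}{2}\mathcal{V}[X,Y]$; the antisymmetry of the Lie bracket then gives $\mathcal{A}_{Y}X=-\mathcal{A}_{X}Y$, completing $ii)$. The two auxiliary facts flagged above, namely the constancy of $g_{M}(X,Y)$ along fibres and the verticality of $[X,V]$ for basic $X$, are exactly where the submersion hypotheses genuinely enter, and carefully justifying them, together with the legitimacy of reducing to basic fields, is the part of the argument I expect to demand the most care.
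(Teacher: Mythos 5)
Your proof is correct. The paper itself gives no argument for this lemma --- it is quoted as well known with a reference to O'Neill --- and what you have written is exactly the classical proof from that source: symmetry of $\mathcal{T}$ from torsion-freeness plus integrability of the vertical distribution (the fibres are its integral submanifolds), and the identity $\mathcal{A}_{X}Y=\tfrac{1}{2}\mathcal{V}[X,Y]$ from the Koszul formula applied to basic fields, using that $g_{M}(X,Y)$ is constant along fibres (Lemma~2~$i)$) and that $[X,V]$ is vertical for basic $X$ and vertical $V$ (Lemma~2~$iv)$); the reduction to basic fields is legitimate because both $\mathcal{V}\nabla_{X}Y$ and $\mathcal{V}[X,Y]$ are tensorial in horizontal arguments.
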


It is easy to see that $\mathcal{T}$ $\ $is vertical, $\mathcal{T}_{E}=%
\mathcal{T}_{\mathcal{V}E}$ and $\mathcal{A}$ is horizontal, $\mathcal{A=A}_{%
\mathcal{H}E}$.

For each $q\in N,$ $F^{-1}(q)$ is an $(m-n)$ dimensional submanifold of $M$.
The submanifolds $F^{-1}(q),$ $q\in N,$ are called fibers. A vector field on 
$M$ is called vertical if it is always tangent to fibers. A vector field on $%
M$ is called horizontal if it is always orthogonal to fibers. A vector field 
$X$ on $M$ is called basic if $X$ is horizontal and $F$-related to a vector
field $X$ on $N,$ i. e., $F_{\ast }X_{p}=X_{\ast F(p)}$ for all $p\in M.$

\begin{lemma}
Let $F:(M,g_{M})\rightarrow (N,g_{N})$ be a Riemannian submersion. If $\ X,$ 
$Y$ are basic vector fields on $M$, then:
\end{lemma}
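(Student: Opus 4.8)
The plan is to establish the four standard O'Neill relations that constitute this lemma: (i) $g_{M}(X,Y)=g_{N}(X_{\ast},Y_{\ast})\circ F$; (ii) $\mathcal{H}[X,Y]$ is basic and $F$-related to $[X_{\ast},Y_{\ast}]$; (iii) $\mathcal{H}(\nabla_{X}Y)$ is basic and $F$-related to $\nabla^{N}_{X_{\ast}}Y_{\ast}$, where $\nabla^{N}$ is the Levi-Civita connection of $(N,g_{N})$; and (iv) $[X,V]$ is vertical for every vertical vector field $V$. First, (i) is immediate from the definition of a Riemannian submersion: by axiom $S2$ together with maximal rank, $F_{\ast}$ restricts at each point to a linear isometry from the horizontal space onto the tangent space of $N$. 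Since $X,Y$ are horizontal with $F_{\ast}X=X_{\ast}$ and $F_{\ast}Y=Y_{\ast}$ along $M$, evaluating at $p$ gives $g_{M}(X,Y)_{p}=g_{N}\bigl((X_{\ast})_{F(p)},(Y_{\ast})_{F(p)}\bigr)$, which is the asserted identity.

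For (ii) and (iv) I would exploit the naturality of the Lie bracket: if $E$ is $F$-related to $E'$ and $W$ is $F$-related to $W'$, then $[E,W]$ is $F$-related to $[E',W']$. Since each basic field $X$ is by definition $F$-related to $X_{\ast}$, the bracket $[X,Y]$ is $F$-related to $[X_{\ast},Y_{\ast}]$, so $F_{\ast}([X,Y])=[X_{\ast},Y_{\ast}]\circ F$. The vertical part of $[X,Y]$ lies in $\ker F_{\ast}$, whence $F_{\ast}(\mathcal{H}[X,Y])=[X_{\ast},Y_{\ast}]$, proving that $\mathcal{H}[X,Y]$ is the basic field associated with $[X_{\ast},Y_{\ast}]$. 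For (iv), a vertical $V$ is $F$-related to the zero field on $N$, so $[X,V]$ is $F$-related to $[X_{\ast},0]=0$; that is, $F_{\ast}[X,V]=0$ and $[X,V]$ is vertical.

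The main work is (iii). Here I would apply the Koszul formula on $M$ to compute $2g_{M}(\nabla_{X}Y,Z)$ for an arbitrary basic horizontal $Z$, and the Koszul formula on $N$ to compute $2g_{N}(\nabla^{N}_{X_{\ast}}Y_{\ast},Z_{\ast})$. Term by term the two expressions agree: the derivative terms match because, for basic fields, $X\,g_{M}(Y,Z)=X\bigl(g_{N}(Y_{\ast},Z_{\ast})\circ F\bigr)=\bigl(X_{\ast}\,g_{N}(Y_{\ast},Z_{\ast})\bigr)\circ F$ by (i) together with the $F$-relatedness of $X$ with $X_{\ast}$; and the three inner-product-of-bracket terms match by (ii) combined with (i). Since the basic horizontal fields $Z$ span the horizontal distribution pointwise, and since the vertical component of $\nabla_{X}Y$ is annihilated by pairing with horizontal $Z$, this yields $g_{M}(\mathcal{H}\nabla_{X}Y,Z)=g_{N}(\nabla^{N}_{X_{\ast}}Y_{\ast},Z_{\ast})\circ F$, forcing $\mathcal{H}\nabla_{X}Y$ to be $F$-related to $\nabla^{N}_{X_{\ast}}Y_{\ast}$ and hence basic.

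The step I expect to be delicate is the bookkeeping in (iii): one must track which Koszul bracket terms still carry a vertical component, verify that these contribute nothing after pairing with the horizontal $Z$, and be careful to invoke (i) only where both arguments are genuinely horizontal. Once the six Koszul terms are matched, the conclusion follows from the nondegeneracy of $g_{N}$ on the horizontal distribution.
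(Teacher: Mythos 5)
Your proof is correct and is precisely the classical argument of O'Neill: part (i) by polarizing axiom $S2$, parts (ii) and (iv) by naturality of the Lie bracket under $F$-related fields, and part (iii) by matching the six Koszul terms against basic test fields $Z$ and using that such fields span the horizontal distribution. The paper itself offers no proof --- it states the lemma as well known and cites \cite{BO1} and \cite{BO2} --- so there is nothing to contrast; your argument is the standard one found in those references and has no gaps.
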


$i)$ $g_{M}(X,Y)=g_{N}(X_{\ast },Y_{\ast })\circ F,$

$ii)$ $\mathcal{H}[X,Y]$ is basic, $F$-related to $[X_{\ast },Y_{\ast }]$,

$iii)$ $\mathcal{H}(\nabla _{X}Y)$ is basic vector field corresponding to $%
\nabla _{X_{\ast }}^{^{\ast }}Y_{\ast }$ where $\nabla ^{\ast }$ is the
connection on $N.$

$iv)$ for any vertical vector field $V$, $[X,V]$ is vertical.

Moreover, if $X$ is basic and $U$ is vertical then $\mathcal{H}(\nabla
_{U}X)=\mathcal{H}(\nabla _{X}U)=\mathcal{A}_{X}U.$ On the other hand, from (%
\ref{AT1}) and (\ref{AT2}) we have%
\begin{eqnarray}
\nabla _{V}W &=&\mathcal{T}_{V}W+\hat{\nabla}_{V}W  \label{1} \\
\nabla _{V}X &=&\mathcal{H\nabla }_{V}X+\mathcal{T}_{V}X  \label{2} \\
\nabla _{X}V &=&\mathcal{A}_{X}V+\mathcal{V}\nabla _{X}V  \label{3} \\
\nabla _{X}Y &=&\mathcal{H\nabla }_{X}Y+\mathcal{A}_{X}Y  \label{4}
\end{eqnarray}%
for $X,Y\in \Gamma ((\ker F_{\ast })^{\bot })$ and $V,W\in \Gamma (\ker
F_{\ast }),$ where $\hat{\nabla}_{V}W=\mathcal{V}\nabla _{V}W.$

Notice that $\mathcal{T}$ \ acts on the fibres as the second fundamental
form of the submersion and restricted to vertical vector fields and it can
be easily seen that $\mathcal{T}=0$ is equivalent to the condition that the
fibres are totally geodesic. A Riemannian submersion is called a Riemannian
submersion with totally geodesic fiber if $\mathcal{T}$ $\ $vanishes
identically. Let $U_{1},...,U_{m-n}$ be an orthonormal frame of $\Gamma
(\ker F_{\ast }).$ Then the horizontal vector field $H$ $=\frac{1}{m-n}%
\sum\limits_{j=1}^{m-n}\mathcal{T}_{U_{j}}U_{j}$ is called the mean
curvature vector field of the fiber. If \ $H$ $=0$ the Riemannian submersion
is said to be minimal. A Riemannian submersion is called a Riemannian
submersion with totally umbilical fibers if 
\begin{equation}
\mathcal{T}_{U}W=g_{M}(U,W)H  \label{4a}
\end{equation}%
for $U,W\in $ $\Gamma (\ker F_{\ast })$. For any $E\in \Gamma (TM),\mathcal{T%
}_{E\text{ }}$and $\mathcal{A}_{E}$ are skew-symmetric operators on $(\Gamma
(TM),g_{M})$ reversing the horizontal and the vertical distributions. By
Lemma 1 horizontally distribution $\mathcal{H}$ is integrable if and only if
\ $\mathcal{A=}0$. For any $D,E,G\in \Gamma (TM)$ one has%
\begin{equation}
g(\mathcal{T}_{D}E,G)+g(\mathcal{T}_{D}G,E)=0,  \label{4b}
\end{equation}%
\begin{equation}
g(\mathcal{A}_{D}E,G)+g(\mathcal{A}_{D}G,E)=0.  \label{4c}
\end{equation}

We recall the notion of harmonic maps between Riemannian manifolds. Let $%
(M,g_{M})$ and $(N,g_{N})$ be Riemannian manifolds and suppose that $\varphi
:M\rightarrow N$ is a smooth map between them. Then the differential $%
\varphi _{\ast }$ of $\varphi $ can be viewed a section of the bundle $\
Hom(TM,\varphi ^{-1}TN)\rightarrow M,$ where $\varphi ^{-1}TN$ is the
pullback bundle which has fibres $(\varphi ^{-1}TN)_{p}=T_{\varphi (p)}N,$ $%
p\in M.\ Hom(TM,\varphi ^{-1}TN)$ has a connection $\nabla $ induced from
the Levi-Civita connection $\nabla ^{M}$ and the pullback connection. Then
the second fundamental form of $\varphi $ is given by 
\begin{equation}
(\nabla \varphi _{\ast })(X,Y)=\nabla _{X}^{\varphi }\varphi _{\ast
}(Y)-\varphi _{\ast }(\nabla _{X}^{M}Y)  \label{5}
\end{equation}%
for $X,Y\in \Gamma (TM),$ where $\nabla ^{\varphi }$ is the pullback
connection. It is known that the second fundamental form is symmetric. If $%
\varphi $ is a Riemannian submersion it can be easily prove that 
\begin{equation}
(\nabla \varphi _{\ast })(X,Y)=0  \label{5a}
\end{equation}%
for $X,Y\in \Gamma ((\ker F_{\ast })^{\bot })$. A smooth map $\varphi
:(M,g_{M})\rightarrow (N,g_{N})$ is said to be harmonic if $trace(\nabla
\varphi _{\ast })=0.$ On the other hand, the tension field of $\varphi $ is
the section $\tau (\varphi )$ of $\Gamma (\varphi ^{-1}TN)$ defined by%
\begin{equation}
\tau (\varphi )=div\varphi _{\ast }=\sum_{i=1}^{m}(\nabla \varphi _{\ast
})(e_{i},e_{i}),  \label{6}
\end{equation}%
where $\left\{ e_{1},...,e_{m}\right\} $ is the orthonormal frame on $M$.
Then it follows that $\varphi $ is harmonic if and only if $\tau (\varphi
)=0 $, for details, \cite{B}.

Let $g$ be a Riemannian metric tensor on the manifold $M=M_{1}\times M_{2}$
and assume that the canonical foliations \ $D_{M_{1}}$ and $D_{M_{2}}$
intersect perpendicularly everywhere. Then $g$ is the metric tensor of a
usual product of Riemannian manifolds if and only if $D_{M_{1}}$ and $%
D_{M_{2}}$ are totally geodesic foliations \cite{PON}.

\section{\textbf{Cosymplectic Manifolds}}

A $(2m+1)$-dimensional $C^{\infty }$-manifold $M$ is said to have an almost
contact structure if there exist on $M$ a tensor field $\phi $ of type $%
(1,1) $, a vector field $\xi $ and 1-form $\eta $ satisfying:%
\begin{equation}
\phi ^{2}=-I+\eta \otimes \xi ,\text{ \ }\phi \xi =0,\text{ }\eta \circ \phi
=0,\text{ \ \ }\eta (\xi )=1.  \label{fi}
\end{equation}%
There always exists a Riemannian metric $g$ on an almost contact manifold $M$
satisfying the following conditions%
\begin{equation}
g(\phi X,\phi Y)=g(X,Y)-\eta (X)\eta (Y),\text{\ }\eta (X)=g(X,\xi )
\label{metric}
\end{equation}%
where $X,Y$ are vector fields on $M.$

An almost contact structure $(\phi ,\xi ,\eta )$ is said to be normal if the
almost complex structure $J$ on the product manifold $M\times R$ is given by%
\begin{equation*}
J(X,f\frac{d}{dt})=(\phi X-f\xi ,\eta (X)\frac{d}{dt})
\end{equation*}%
where $f$ is the $C^{\infty }$-function on $M\times R$ has no torsion i.e., $%
J$ is integrable. The condition for normality in terms of $\phi ,\xi $ and $%
\eta $ is $\left[ \phi ,\phi \right] +2d\eta \otimes \xi =0$ on $M$, where $%
\left[ \phi ,\phi \right] $ is the Nijenhuis tensor of $\phi .$~Finally, the
fundamental two-form $\Phi $ is defined by $\Phi (X,Y)=g(X,\phi Y).$

An almost contact metric structure $(\phi ,\xi ,\eta ,g)$ is said to be
cosymplectic, if it is normal and both $\Phi $ and $\eta $ are closed (\cite%
{BL1}, \cite{LUDDEN}), and the structure equation of a cosymplectic manifold
is given by%
\begin{equation}
(\nabla _{X}\phi )Y=0  \label{nambla}
\end{equation}%
for any $X,Y$ tangent to $M,$ where $\nabla $ denotes the Riemannian
connection of the metric $g$ on $M.$ Moreover, for cosymplectic manifold%
\begin{equation}
\nabla _{X}\xi =0.  \label{xzeta}
\end{equation}

The canonical example of cosymplectic manifold is given by the product $%
B^{2n}\times 
%TCIMACRO{\U{211d} }%
%BeginExpansion
\mathbb{R}
%EndExpansion
$ Kahler manifold $B^{2n}(J,g)$ with $%
%TCIMACRO{\U{211d} }%
%BeginExpansion
\mathbb{R}
%EndExpansion
$ real line. Now we will introduce a well known cosymplectic manifold
example on $%
%TCIMACRO{\U{211d} }%
%BeginExpansion
\mathbb{R}
%EndExpansion
^{2n+1}.$

\begin{example}[\protect\cite{O}]
We consider $%
%TCIMACRO{\U{211d} }%
%BeginExpansion
\mathbb{R}
%EndExpansion
^{2n+1}$ with Cartesian coordinates $(x_{i},y_{i},z)$ $(i=1,...,n)$ and its
usual contact form 
\begin{equation*}
\eta =dz,
\end{equation*}%
The characteristic vector field $\xi $ is given by $\frac{\partial }{%
\partial z}$ and its Riemannian metric $g$ and tensor field $\phi $ are
given by%
\begin{equation*}
g=\sum\limits_{i=1}^{n}((dx_{i})^{2}+(dy_{i})^{2})+(dz)^{2},\text{ \ }\phi
=\left( 
\begin{array}{ccc}
0 & \delta _{ij} & 0 \\ 
-\delta _{ij} & 0 & 0 \\ 
0 & 0 & 0%
\end{array}%
\right) \text{, \ }i=1,...,n
\end{equation*}%
This gives a cosymplectic structure on $%
%TCIMACRO{\U{211d} }%
%BeginExpansion
\mathbb{R}
%EndExpansion
^{2n+1}$. The vector fields $E_{i}=\frac{\partial }{\partial y_{i}},$ $%
E_{n+i}=\frac{\partial }{\partial x_{i}}$, $\xi $ form a $\phi $-basis for
the cosymplectic structure. On the other hand, it can be shown that $%
%TCIMACRO{\U{211d} }%
%BeginExpansion
\mathbb{R}
%EndExpansion
^{2n+1}(\phi ,\xi ,\eta ,g)$ is a cosymplectic manifold.
\end{example}

\begin{example}[\protect\cite{KIM}]
We denote Cartesian coordinates in $%
%TCIMACRO{\U{211d} }%
%BeginExpansion
\mathbb{R}
%EndExpansion
^{5}$ by $(x_{1},x_{2},x_{3},x_{4},x_{5})$ and its Riemannian metric $g$ 
\begin{equation*}
g=\left( 
\begin{array}{ccccc}
1+\tau ^{2} & 0 & \tau ^{2} & 0 & -\tau \\ 
0 & 1 & 0 & 0 & 0 \\ 
\tau ^{2} & 0 & 1+\tau ^{2} & 0 & -\tau \\ 
0 & 0 & 0 & 1 & 0 \\ 
-\tau & 0 & -\tau & 0 & 1%
\end{array}%
\right) ,\text{ \ }
\end{equation*}%
where $\tau =\sin (x_{1}+x_{3}).$ We define \ an \ almost contact structure $%
(\phi ,\xi ,\eta )$ on $%
%TCIMACRO{\U{211d} }%
%BeginExpansion
\mathbb{R}
%EndExpansion
^{5}$ by 
\begin{equation*}
\phi =\left( 
\begin{array}{ccccc}
0 & -1 & 0 & 0 & 0 \\ 
1 & 0 & 0 & 0 & 0 \\ 
0 & 0 & 0 & -1 & 0 \\ 
0 & 0 & 1 & 0 & 0 \\ 
0 & -\tau & 0 & -\tau & 0%
\end{array}%
\right) ,\eta =-\tau dx_{1}-\tau dx_{3}+dx_{5},\xi =\frac{\partial }{%
\partial x_{5}}.
\end{equation*}%
The fundamental 2-form $\Phi $ have the form 
\begin{equation*}
\Phi =dx_{1}\wedge dx_{2}+dx_{3}\wedge dx_{4}.
\end{equation*}%
This gives a cosymplectic structure on $%
%TCIMACRO{\U{211d} }%
%BeginExpansion
\mathbb{R}
%EndExpansion
^{5}$. If we take vector fields $E_{1}=\frac{\partial }{\partial x_{1}}+\tau 
\frac{\partial }{\partial x_{5}},E_{2}=\frac{\partial }{\partial x_{3}},\phi
E_{1}=E_{3}=\frac{\partial }{\partial x_{2}},\phi E_{2}=E_{4}=\frac{\partial 
}{\partial x4}$ and $E_{5}=\frac{\partial }{\partial x_{5}}$ then these
vector fields form a frame field in $%
%TCIMACRO{\U{211d} }%
%BeginExpansion
\mathbb{R}
%EndExpansion
^{5}$.
\end{example}

\section{\textbf{Anti-invariant Riemannian submersions}}

\begin{definition}
Let $M(\phi ,\xi ,\eta ,g_{M})$ be a cosymplectic manifold and $(N,g_{N})$
be a Riemannian manifold. A Riemannian submersion $F:M(\phi ,\xi ,\eta
,g_{M})\rightarrow $ $(N,g_{N})$ is called an anti-invariant Riemannian
submersion if $\ker F_{\ast }$ is anti-invariant with respect to $\phi $,
i.e. $\phi (\ker F_{\ast })\subseteq (\ker F_{\ast })^{\bot }.$
\end{definition}

Let $F:M(\phi ,\xi ,\eta ,g_{M})\rightarrow $ $(N,g_{N})$ be an
anti-invariant Riemannian submersion from a cosymplectic manifold $M(\phi
,\xi ,\eta ,g_{M})$ to a Riemannian manifold $(N,g_{N}).$ First of all, from
Definition 1, we have $\phi (\ker F_{\ast })\cap (\ker F_{\ast })^{\bot
}\neq \left\{ 0\right\} .$ We denote the complementary orthogonal
distribution to $\phi (\ker F_{\ast })$ in $(\ker F_{\ast })^{\bot }$ by $%
\mu .$ Then we have%
\begin{equation}
(\ker F_{\ast })^{\bot }=\phi \ker F_{\ast }\oplus \mu .  \label{A1}
\end{equation}%
Now we will introduce some examples.\ \ \ \ \ \ \ \ 

\begin{example}
$%
%TCIMACRO{\U{211d} }%
%BeginExpansion
\mathbb{R}
%EndExpansion
^{5}$ \ has got a cosymplectic structure as in Example 1.

Let $F:%
%TCIMACRO{\U{211d} }%
%BeginExpansion
\mathbb{R}
%EndExpansion
^{5}\rightarrow 
%TCIMACRO{\U{211d} }%
%BeginExpansion
\mathbb{R}
%EndExpansion
^{2}$ be a map defined by $F(x_{1},x_{2},y_{1},y_{2},z)=(\frac{x_{1}+y_{2}}{%
\sqrt{2}},\frac{x_{2}+y_{1}}{\sqrt{2}})$. Then, by direct calculations 
\begin{equation*}
\ker F_{\ast }=span\{V_{1}=\frac{1}{\sqrt{2}}(E_{1}-E_{4}),\text{ }V_{2}=%
\frac{1}{\sqrt{2}}(E_{2}-E_{3}),\text{ }V_{3}=E_{5}=\xi \}
\end{equation*}%
and 
\begin{equation*}
(\ker F_{\ast })^{\bot }=span\{H_{1}=\frac{1}{\sqrt{2}}(E_{1}+E_{4}),\text{ }%
H_{2}=\frac{1}{\sqrt{2}}(E_{2}+E_{3})\}.
\end{equation*}%
Then it is easy to see that $F$ is a Riemannian submersion. Moreover, $\phi
V_{1}=H_{2},$ $\phi V_{2}=H_{1},$ $\phi V_{3}=0$ imply that $\phi (\ker
F_{\ast })=(\ker F_{\ast })^{\bot }.$ As a result, $F$ is an anti-invariant
Riemannian submersion such that $\xi $ is vertical.
\end{example}

\begin{example}
$%
%TCIMACRO{\U{211d} }%
%BeginExpansion
\mathbb{R}
%EndExpansion
^{5}$ be a cosymplectic manifold as in Example 2.

Let $F:%
%TCIMACRO{\U{211d} }%
%BeginExpansion
\mathbb{R}
%EndExpansion
^{5}\rightarrow 
%TCIMACRO{\U{211d} }%
%BeginExpansion
\mathbb{R}
%EndExpansion
^{2}$ be a map defined by $%
F(x_{1},x_{2},x_{3},x_{4},x_{5})=(x_{1}+x_{2},x_{3}+x_{4})$. \ After some
calculations we have%
\begin{equation*}
\ker F_{\ast }=span\{V_{1}=E_{1}-E_{3},\text{ }V_{2}=E_{2}-E_{4},V_{3}=\xi \}
\end{equation*}%
and%
\begin{equation*}
(\ker F_{\ast })^{\bot }=span\{H_{1}=E_{1}+E_{3},\text{ }H_{2}=E_{2}+E_{4}\}
\end{equation*}%
Then it is easy to see that $F$ is a Riemannian submersion. Moreover, $\phi
V_{1}=H_{1},$ $\phi V_{2}=H_{2},$ $\phi V_{3}=0$ imply that $\phi (\ker
F_{\ast })=(\ker F_{\ast })^{\bot }.$ As a result, $F$ is an anti-invariant
Riemannian submersion such that $\xi $ is vertical.
\end{example}

\begin{example}
$%
%TCIMACRO{\U{211d} }%
%BeginExpansion
\mathbb{R}
%EndExpansion
^{7}$ be a cosymplectic manifold as in Example 1.

Let $F:%
%TCIMACRO{\U{211d} }%
%BeginExpansion
\mathbb{R}
%EndExpansion
^{7}\rightarrow 
%TCIMACRO{\U{211d} }%
%BeginExpansion
\mathbb{R}
%EndExpansion
^{4}$ be a map defined by $F(x_{1},x_{2},x_{3},y_{1},y_{2},y_{3},z)=(\frac{1%
}{\sqrt{2}}(x_{1}+y_{1}),\frac{1}{\sqrt{2}}(x_{2}+y_{2}),\frac{1}{\sqrt{2}}%
(x_{3}+y_{3}),\frac{1}{\sqrt{2}}(x_{3}-y_{3}))$. \ After some calculations
we have%
\begin{equation*}
\ker F_{\ast }=span\{V_{1}=\frac{1}{\sqrt{2}}(E_{1}-E_{4}),V_{2}=\frac{1}{%
\sqrt{2}}(E_{2}-E_{5}),V_{3}=\xi \}
\end{equation*}%
and%
\begin{equation*}
(\ker F_{\ast })^{\bot }=span\{H_{1}=\frac{1}{\sqrt{2}}(E_{1}+E_{4}),\text{ }%
H_{2}=\frac{1}{\sqrt{2}}(E_{2}+E_{5}),\text{ }H_{3}=\frac{1}{\sqrt{2}}%
(E_{3}-E_{6}),H_{4}=\frac{1}{\sqrt{2}}(E_{3}+E_{6})\}.
\end{equation*}%
Then it is easy to see that $F$ is a Riemannian submersion. Moreover, $\phi
V_{1}=H_{1}$, $\phi V_{2}=H_{2}$ imply that $\phi (\ker F_{\ast })\subset
(\ker F_{\ast })^{\bot }=\phi (\ker F_{\ast })\oplus span\{H_{3},H_{4}\}.$%
Hence $F$ is an anti-invariant Riemannian submersion such that $\xi $ is
vertical.
\end{example}

\begin{example}
$%
%TCIMACRO{\U{211d} }%
%BeginExpansion
\mathbb{R}
%EndExpansion
^{5}$ be a cosymplectic manifold as in Example 1.

Let $F:%
%TCIMACRO{\U{211d} }%
%BeginExpansion
\mathbb{R}
%EndExpansion
^{5}\rightarrow 
%TCIMACRO{\U{211d} }%
%BeginExpansion
\mathbb{R}
%EndExpansion
^{3}$ be a map defined by $F(x_{1},x_{2},y_{1},y_{2},z)=(\frac{x_{1}+y_{2}}{%
\sqrt{2}},\frac{x_{2}+y_{1}}{\sqrt{2}},z)$. \ After some calculations we have%
\begin{equation*}
\ker F_{\ast }=span\{V_{1}=\frac{1}{\sqrt{2}}(E_{1}-E_{4}),\text{ }V_{2}=%
\frac{1}{\sqrt{2}}(E_{2}-E_{3})\}
\end{equation*}%
and%
\begin{equation*}
(\ker F_{\ast })^{\bot }=span\{H_{1}=\frac{1}{\sqrt{2}}(E_{1}+E_{4}),\text{ }%
H_{2}=\frac{1}{\sqrt{2}}(E_{2}+E_{3}),\text{ }H_{3}=E_{5}=\xi \}
\end{equation*}%
Then it is easy to see that $F$ is a Riemannian submersion. Moreover, $\phi
V_{1}=H_{2}$, $\phi V_{2}=H_{1}$ imply that $\phi (\ker F_{\ast })\subset
(\ker F_{\ast })^{\bot }=$ $\phi (\ker F_{\ast })\oplus \{\xi \}$. Thus $F$
is an anti-invariant Riemannian submersion such that $\xi $ is horizontal.
\end{example}

\subsection{\textbf{Anti-invariant submersions admitting vertical structure
vector field }}

In this section, we will study anti-invariant submersions from a
cosymplectic manifold onto a Riemannian manifold such that the
characteristic vector field $\xi $ is vertical.

It is easy to see that $\mu $ is an invariant distribution of $(\ker F_{\ast
})^{\bot },$ under the endomorphism $\phi $. Thus, for $X\in \Gamma ((\ker
F_{\ast })^{\bot }),$ we write%
\begin{equation}
\phi X=BX+CX,  \label{A2}
\end{equation}%
where $BX\in \Gamma (\ker F_{\ast })$ and $CX\in \Gamma (\mu ).$ On the
other hand, since $F_{\ast }((\ker F_{\ast })^{\bot })=TN$ and $F$ is a
Riemannian submersion, using (\ref{A2}) we derive $g_{N}(F_{\ast }\phi
V,F_{\ast }CX)=0,$ for every $X\in $ $\Gamma ((\ker F_{\ast }))^{\perp \text{
}}$and $V\in \Gamma (\ker F_{\ast })$, which implies that 
\begin{equation}
TN=F_{\ast }(\phi (\ker F_{\ast }))\oplus F_{\ast }(\mu ).  \label{A2a}
\end{equation}

\begin{theorem}
Let $M(\phi ,\xi ,\eta ,g_{M})$ be a cosymplectic manifold \ of dimension $%
2m+1$ and $(N,g_{N})$ is a Riemannian manifold of dimension $n$. Let $%
F:M(\phi ,\xi ,\eta ,g_{M})\rightarrow $ $(N,g_{N})$ be an anti-invariant
Riemannian submersion such that $\phi (\ker F_{\ast })=(\ker F_{\ast
})^{\bot }$. Then the characteristic vector field $\xi $ is vertical and $%
m=n $.
\end{theorem}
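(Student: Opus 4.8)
The plan is to reduce the statement to a single dimension count for the endomorphism $\phi$ restricted to the vertical distribution $\ker F_{\ast}$, which has dimension $2m+1-n$. By hypothesis $\phi(\ker F_{\ast})=(\ker F_{\ast})^{\bot}$, and the latter space has dimension $n$ since $F$ is a Riemannian submersion onto an $n$-dimensional manifold. The whole proof will rest on comparing the nullity of $\phi|_{\ker F_{\ast}}$ computed in two different ways.

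First I would pin down the kernel of $\phi$ viewed as an endomorphism of $TM$. From the almost contact identities (\ref{fi}), if $\phi X=0$ then applying $\phi$ once more and using $\phi^{2}=-I+\eta\otimes\xi$ gives $0=-X+\eta(X)\xi$, so $X=\eta(X)\xi$; combined with $\phi\xi=0$ this shows that $\ker\phi=\mathbb{R}\xi$ is exactly a line. Next I would apply the rank--nullity theorem to the surjection $\phi|_{\ker F_{\ast}}\colon\ker F_{\ast}\to(\ker F_{\ast})^{\bot}$ furnished by the equality hypothesis, obtaining
\begin{equation*}
\dim\ker\left(\phi|_{\ker F_{\ast}}\right)=\dim(\ker F_{\ast})-\dim(\ker F_{\ast})^{\bot}=(2m+1-n)-n=2m+1-2n .
\end{equation*}
On the other hand $\ker\left(\phi|_{\ker F_{\ast}}\right)=\ker F_{\ast}\cap\ker\phi=\ker F_{\ast}\cap\mathbb{R}\xi$, whose dimension is therefore either $0$ or $1$.

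The decisive observation, and really the only content of the argument, is a parity remark: the integer $2m+1-2n$ is odd, yet it is forced to lie in $\{0,1\}$. The only odd value available is $1$, whence $2m+1-2n=1$, which gives $m=n$; at the same time $\dim(\ker F_{\ast}\cap\mathbb{R}\xi)=1$ forces $\xi\in\ker F_{\ast}$, that is, $\xi$ is vertical. I do not expect any genuine obstacle once $\ker\phi=\mathbb{R}\xi$ has been identified, since from there everything is linear algebra and the oddness of $2m+1-2n$ does all the work. The single point requiring a word of care is that $2m+1-2n\ge 0$ holds automatically, being a dimension, so that confining this integer to $\{0,1\}$ and invoking parity is legitimate.
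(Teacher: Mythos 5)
Your proof is correct, but it takes a genuinely different route from the paper's. The paper first proves verticality of $\xi$ directly: using the skew-symmetry of $\phi$ with respect to $g_{M}$ (a consequence of (\ref{metric})) it gets $g_{M}(\xi ,\phi U)=-g_{M}(\phi \xi ,U)=0$ for all vertical $U$, so $\xi$ is orthogonal to $\phi (\ker F_{\ast })=(\ker F_{\ast })^{\bot }$ and hence vertical; it then picks an orthonormal frame $U_{1},\dots ,U_{k-1},\xi$ of $\ker F_{\ast }$ with $k=2m-n+1$ and observes that $\phi U_{1},\dots ,\phi U_{k-1}$ is an orthonormal frame of $(\ker F_{\ast })^{\bot }$, forcing $k-1=n$ and so $m=n$. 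You instead never touch the metric compatibility: you identify $\ker \phi =\mathbb{R}\xi$ purely from $\phi ^{2}=-I+\eta \otimes \xi$ and $\phi \xi =0$, apply rank--nullity to the surjection $\phi |_{\ker F_{\ast }}\colon \ker F_{\ast }\to (\ker F_{\ast })^{\bot }$ to get $\dim \bigl(\ker F_{\ast }\cap \mathbb{R}\xi \bigr)=2m+1-2n$, and let the parity of this integer force it to equal $1$, which yields both conclusions simultaneously. Your argument is slightly more economical in its hypotheses (only the algebraic identity (\ref{fi}) and the dimension count enter, not (\ref{metric})), and it packages verticality of $\xi$ and $m=n$ into a single step; the paper's argument, on the other hand, makes explicit the geometric fact that $\phi$ carries an orthonormal vertical frame complementary to $\xi$ onto an orthonormal horizontal frame, which is the picture used repeatedly later in the paper. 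Both proofs are sound.
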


\begin{proof}
By the assumption $\phi (\ker F_{\ast })=(\ker F_{\ast })^{\bot }$, for any $%
U\in \Gamma (\ker F_{\ast })$ we have $g_{M}(\xi ,\phi U)=-g_{M}(\phi \xi
,U)=0$, which shows that the structure vector field is vertical. Now we
suppose that $U_{1},...,U_{k-1},\xi =U_{k}$ be an orthonormal frame of $%
\Gamma (\ker F_{\ast })$, where $k=2m-n+1$. Since $\phi (\ker F_{\ast
})=(\ker F_{\ast })^{\bot }$, $\phi U_{1},...,\phi U_{k-1}$ form an
orthonormal frame of $\Gamma ((\ker F_{\ast })^{\bot })$. So, by help of (%
\ref{A2a}) $\ $we obtain $k=n+1$ which implies that $m=n$.
\end{proof}

\begin{remark}
We note that Example 3 and Example 4 satisfy Theorem 1.
\end{remark}

From (\ref{fi}) and (\ref{A2}) we have following Lemma.

\begin{lemma}
Let $F$ be an anti-invariant Riemannian submersion from a cosymplectic
manifold $M(\phi ,\xi ,\eta ,g_{M})$ to a Riemannian manifold $(N,g_{N})$.
Then we have%
\begin{eqnarray*}
BCX &=&0,\text{ }\eta (BX)=0, \\
C^{2}X &=&-X-\phi BX, \\
C\phi V &=&0,\text{ }C^{3}X+CX=0, \\
B\phi V &=&-V+\eta (V)\xi
\end{eqnarray*}%
for any $X\in \Gamma ((\ker F_{\ast })^{\bot })$ and $V\in \Gamma ((\ker
F_{\ast })).$
\end{lemma}

Using (\ref{nambla}) one can easily obtain 
\begin{equation}
\nabla _{X}Y=-\phi \nabla _{X}\phi Y  \label{Namblafi2}
\end{equation}%
for $X,Y\in \Gamma ((\ker F_{\ast })^{\bot }).$

\begin{lemma}
Let $F$ be an anti-invariant Riemannian submersion from a cosymplectic
manifold $M(\phi ,\xi ,\eta ,g_{M})$ to a Riemannian manifold $(N,g_{N})$.
Then we have%
\begin{equation}
\mathcal{A}_{X}\xi =0,  \label{C11}
\end{equation}%
\begin{equation}
\mathcal{T}_{U}\xi =0,  \label{C1,5}
\end{equation}%
\begin{equation}
g_{M}(CX,\phi U)=0,  \label{C2}
\end{equation}%
\begin{equation}
g_{M}(\nabla _{X}CY,\phi U)=-g_{M}(CY,\phi \mathcal{A}_{X}U)  \label{C3}
\end{equation}%
for any $X,Y\in \Gamma ((\ker F_{\ast })^{\bot })$ and $U\in \Gamma ((\ker
F_{\ast })).$
\end{lemma}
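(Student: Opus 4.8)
The four identities are of two distinct flavors. The first two, \eqref{C11} and \eqref{C1,5}, assert the vanishing of the O'Neill tensors applied to the structure vector field $\xi$; these should follow almost immediately from the cosymplectic parallelism $\nabla_X \xi = 0$ recorded in \eqref{xzeta}. The third, \eqref{C2}, is a purely algebraic orthogonality statement about the $\mu$-component $C$, and should come from the definition \eqref{A2} together with the compatibility of $g_M$ with $\phi$ in \eqref{metric}. The last, \eqref{C3}, is the only one requiring a genuine computation involving the Levi-Civita connection and the integrability tensor $\mathcal{A}$.

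\textbf{Steps for \eqref{C11} and \eqref{C1,5}.} The plan is to use \eqref{xzeta}, i.e.\ $\nabla_E \xi = 0$ for every $E$. For \eqref{C11}, I would write $\mathcal{A}_X \xi = \mathcal{V}\nabla_{\mathcal{H}X}\mathcal{H}\xi + \mathcal{H}\nabla_{\mathcal{H}X}\mathcal{V}\xi$ from \eqref{AT2}. Since $\xi$ is vertical in this section (it lies in $\ker F_\ast$), one has $\mathcal{H}\xi = 0$ and $\mathcal{V}\xi = \xi$, so $\mathcal{A}_X\xi = \mathcal{H}\nabla_X \xi = 0$ by \eqref{xzeta}. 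Entirely analogously, $\mathcal{T}_U\xi = \mathcal{H}\nabla_{\mathcal{V}U}\mathcal{V}\xi + \mathcal{V}\nabla_{\mathcal{V}U}\mathcal{H}\xi = \mathcal{H}\nabla_U \xi = 0$ from \eqref{AT1}, again using $\nabla_U\xi=0$. Both are one-line consequences once the vertical/horizontal splitting of $\xi$ is invoked.

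\textbf{Steps for \eqref{C2}.} This is algebraic. For $X\in\Gamma((\ker F_\ast)^\bot)$ and $U\in\Gamma(\ker F_\ast)$, I would compute $g_M(CX,\phi U)$ using \eqref{A2} to write $CX=\phi X - BX$, and note that $CX\in\Gamma(\mu)$ while $\phi U\in\phi(\ker F_\ast)\subseteq(\ker F_\ast)^\bot$. Since $\mu$ is by definition the orthogonal complement of $\phi(\ker F_\ast)$ inside $(\ker F_\ast)^\bot$ (see \eqref{A1}), the two vectors are orthogonal, giving \eqref{C2} directly. Alternatively, one can use \eqref{metric}: $g_M(CX,\phi U)=g_M(\phi X - BX,\phi U)=g_M(X,U)-\eta(X)\eta(U)-g_M(BX,\phi U)$, and since $X$ is horizontal and $U$ vertical the first two terms vanish, while $g_M(BX,\phi U)$ vanishes because $BX\in\Gamma(\ker F_\ast)$ and $\phi U\in\Gamma((\ker F_\ast)^\bot)$.

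\textbf{Steps for \eqref{C3} — the main obstacle.} This is where the real work lies. The plan is to start from the left side $g_M(\nabla_X CY,\phi U)$ and differentiate the orthogonality relation \eqref{C2}, which reads $g_M(CY,\phi U)=0$ for all arguments. Applying $X$ and using metric compatibility of $\nabla$ gives
\begin{equation*}
0 = X\,g_M(CY,\phi U) = g_M(\nabla_X CY,\phi U) + g_M(CY,\nabla_X(\phi U)).
\end{equation*}
Hence $g_M(\nabla_X CY,\phi U) = -g_M(CY,\nabla_X(\phi U))$. The crux is then to show $g_M(CY,\nabla_X(\phi U)) = g_M(CY,\phi\,\mathcal{A}_X U)$. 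Using the cosymplectic condition \eqref{nambla}, $\nabla_X(\phi U)=\phi\nabla_X U$, so it remains to prove $g_M(CY,\phi\nabla_X U)=g_M(CY,\phi\,\mathcal{A}_X U)$. Decomposing $\nabla_X U = \mathcal{A}_X U + \mathcal{V}\nabla_X U$ via \eqref{3}, I expect the vertical part to drop out: since $CY\in\Gamma(\mu)$, the term $g_M(CY,\phi\,\mathcal{V}\nabla_X U)$ vanishes by \eqref{C2} (because $\mathcal{V}\nabla_X U\in\Gamma(\ker F_\ast)$, so $\phi\,\mathcal{V}\nabla_X U\in\phi(\ker F_\ast)$, orthogonal to $\mu$). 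What survives is exactly $g_M(CY,\phi\,\mathcal{A}_X U)$, yielding \eqref{C3}. The delicate point to verify carefully is that $\mathcal{A}_X U$ is horizontal (it is, by \eqref{AT2}, since $\mathcal{A}$ reverses distributions), so that $\phi\,\mathcal{A}_X U$ is a legitimate argument and the cosymplectic identity $\nabla_X\phi=0$ applies without an $\eta$-correction term; checking that no $\eta(U)\xi$ contribution appears when pushing $\phi$ through is the step I would watch most closely.
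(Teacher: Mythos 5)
Your proposal is correct and follows essentially the same route as the paper: \eqref{C11} and \eqref{C1,5} from $\nabla\xi=0$ together with the vertical/horizontal splitting, \eqref{C2} from the algebraic expansion of $g_M(\phi X-BX,\phi U)$ via \eqref{metric}, and \eqref{C3} by differentiating the orthogonality relation, applying $\nabla\phi=0$, and discarding the $\mathcal{V}\nabla_XU$ term via \eqref{C2}. Your justification for dropping that last term (orthogonality of $CY\in\Gamma(\mu)$ to $\phi(\ker F_\ast)$) is in fact stated more carefully than in the paper, and the $\eta$-correction you flag does not arise since \eqref{nambla} holds for all tangent arguments.
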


\begin{proof}
By virtue of (\ref{1}) and (\ref{xzeta}) we have (\ref{C11}). Using (\ref{3}%
) and (\ref{xzeta}) we get (\ref{C1,5}).

For $X\in \Gamma ((\ker F_{\ast })^{\bot })$ and $U\in \Gamma (\ker F_{\ast
})$, by virtue of (\ref{metric}) and (\ref{A2}) we get%
\begin{eqnarray}
g_{M}(CX,\phi U) &=&g_{M}(\phi X-BX,\phi U)  \label{C5} \\
&=&g_{M}(X,U)-\eta (X)\eta (U)+g_{M}(\phi BX,U).  \notag
\end{eqnarray}%
Since $\phi BX\in \Gamma ((\ker F_{\ast })^{\bot })$ and $\xi \in \Gamma
(\ker F_{\ast }),$ (\ref{C5}) implies (\ref{C2}).

Then using (\ref{3}), (\ref{nambla}) and (\ref{C2}), we have 
\begin{equation*}
g_{M}(\nabla _{X}CY,\phi U)=-g_{M}(CY,\phi \mathcal{A}_{X}U)-g_{M}(CY,\phi (%
\mathcal{V}\nabla _{X}U)).
\end{equation*}%
Since $\phi (\mathcal{V}\nabla _{X}U)\in \Gamma (\phi \ker F_{\ast })=\Gamma
((\ker F_{\ast })^{\bot }),$ we obtain (\ref{C3}).
\end{proof}

\begin{theorem}
Let $M(\phi ,\xi ,\eta ,g_{M})$ be a cosymplectic manifold\ of dimension $%
2m+1$ and $(N,g_{N})$ is a Riemannian manifold of dimension $n$. Let $%
F:M(\phi ,\xi ,\eta ,g_{M})\rightarrow $ $(N,g_{N})$ be an anti-invariant
Riemannian submersion. Then the fibers are not proper totally umbilical.
\end{theorem}

\begin{proof}
If the fibers are proper totally umbilical, then we have $\mathcal{T}%
_{U}V=g_{M}(U,V)H$ for any vertical vector fields $U,V$ where $H$ is the
mean curvature vector field of any fibre. Since $\mathcal{T}_{\xi }\xi $ $=0$%
, we have $H=0$, which shows that fibres are minimal. Hence the fibers are
totally geodesic. This completes proof of the Theorem.
\end{proof}

Since the distribution $\ker F_{\ast }$ is integrable, we only study the
integrability of the distribution $(\ker F_{\ast })^{\bot }$ and then we
investigate the geometry of leaves of $\ker F_{\ast }$ and $(\ker F_{\ast
})^{\bot }.$

\begin{theorem}
Let $F$ be an anti-invariant Riemannian submersion from a cosymplectic
manifold $M(\phi ,\xi ,\eta ,g_{M})$ to a Riemannian manifold $(N,g_{N})$.
Then the following assertions are equivalent to each other;
\end{theorem}

$\ i)$ $(\ker F_{\ast })^{\bot }$ \textit{is integrable. }

$\ $

$ii)$ 
\begin{eqnarray*}
g_{N}((\nabla F_{\ast })(Y,BX),F_{\ast }\phi V) &=&g_{N}((\nabla F_{\ast
})(X,BY),F_{\ast }\phi V) \\
&&+g_{M}(CY,\phi \mathcal{A}_{X}V)-g_{M}(CX,\phi \mathcal{A}_{Y}V).
\end{eqnarray*}

$iii)$%
\begin{equation*}
g_{M}(\mathcal{A}_{X}BY-\mathcal{A}_{Y}BX,\phi V)=g_{M}(CY,\phi \mathcal{A}%
_{X}V)-g_{M}(CX,\phi \mathcal{A}_{Y}V).
\end{equation*}%
\textit{for }$X,Y\in \Gamma ((\ker F_{\ast })^{\bot })$\textit{\ and }$V\in
\Gamma (\ker F_{\ast }).$

\begin{proof}
Using (\ref{Namblafi2}), for $X,Y\in \Gamma ((\ker F_{\ast })^{\bot })$ and $%
V\in \Gamma (\ker F_{\ast })$ we get%
\begin{eqnarray*}
g_{M}(\left[ X,Y\right] ,V) &=&g_{M}(\nabla _{X}Y,V)-g_{M}(\nabla _{Y}X,V) \\
&=&g_{M}(\nabla _{X}\phi Y,\phi V)-g_{M}(\nabla _{Y}\phi X,\phi V).
\end{eqnarray*}%
Then from (\ref{A2}) we have%
\begin{eqnarray*}
g_{M}(\left[ X,Y\right] ,V) &=&g_{M}(\nabla _{X}BY,\phi V)+g_{M}(\nabla
_{X}CY,\phi V)-g_{M}(\nabla _{Y}BX,\phi V) \\
&&-g_{M}(\nabla _{Y}CX,\phi V).
\end{eqnarray*}%
Using (\ref{AT2}), (\ref{3}) and (\ref{C3}) and if we take into account that 
$F$ is a Riemannian submersion, we obtain%
\begin{eqnarray*}
g_{M}(\left[ X,Y\right] ,V) &=&g_{N}(F_{\ast }\nabla _{X}BY,F_{\ast }\phi
V)-g_{M}(CY,\phi \mathcal{A}_{X}V) \\
&&-g_{N}(F_{\ast }\nabla _{Y}BX,F_{\ast }\phi V)+g_{M}(CX,\phi \mathcal{A}%
_{Y}V).
\end{eqnarray*}%
Thus, from (\ref{5}) we have 
\begin{eqnarray*}
g_{M}(\left[ X,Y\right] ,V) &=&g_{N}(-(\nabla F_{\ast })(X,BY)+(\nabla
F_{\ast })(Y,BX),F_{\ast }\phi V) \\
&&+g_{M}(CX,\phi \mathcal{A}_{Y}V)-g_{M}(CY,\phi \mathcal{A}_{X}V)
\end{eqnarray*}%
which proves $(i)\Leftrightarrow (ii).$ On the other hand using (\ref{5}) we
get%
\begin{equation*}
(\nabla F_{\ast })(Y,BX)-(\nabla F_{\ast })(X,BY)=-F_{\ast }(\nabla
_{Y}BX-\nabla _{X}BY).
\end{equation*}%
Then (\ref{3}) implies that 
\begin{equation*}
(\nabla F_{\ast })(Y,BX)-(\nabla F_{\ast })(X,BY)=-F_{\ast }(\mathcal{A}%
_{Y}BX-\mathcal{A}_{X}BY).
\end{equation*}

From (\ref{AT2}) $\mathcal{A}_{Y}BX-\mathcal{A}_{X}BY\in \Gamma ((\ker
F_{\ast })^{\bot }),$ this shows that $(ii)\Leftrightarrow (iii).$
\end{proof}

\begin{remark}
If $\phi (\ker F_{\ast })=(\ker F_{\ast })^{\bot }$ then we get $C=0$ and
morever (\ref{A2a}) implies that $TN=F_{\ast }(\phi (\ker F_{\ast }))$.
\end{remark}

Hence we have the following Corollary.

\begin{corollary}
Let $F:M(\phi ,\xi ,\eta ,g_{M})\rightarrow $ $(N,g_{N})$ be an
anti-invariant Riemannian submersion such that $\phi (\ker F_{\ast })=(\ker
F_{\ast })^{\bot },$ where $M(\phi ,\xi ,\eta ,g_{M})$ is a cosymplectic
manifold and $(N,g_{N})$ is a Riemannian manifold. Then following assertions
are equivalent to each other;

$i)(\ker F_{\ast })^{\bot }$ is\ integrable\textit{.}

$ii)(\nabla F_{\ast })(Y,\phi X)=(\nabla F_{\ast })(X,\phi Y)$ for $X,Y\in
\Gamma ((\ker F_{\ast })^{\bot }).$

$iii)\mathcal{A}_{X}\phi Y=\mathcal{A}_{Y}\phi X.$
\end{corollary}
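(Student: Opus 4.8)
The plan is to derive the corollary directly from Theorem~4 by specializing to the case $\phi(\ker F_\ast)=(\ker F_\ast)^\bot$. By Remark~2, this hypothesis forces $C=0$, meaning $\phi X=BX$ for every $X\in\Gamma((\ker F_\ast)^\bot)$; in other words $B$ restricted to the horizontal distribution is exactly $\phi$. The key observation is that with $C=0$ the entire right-hand side of Theorem~4~(ii) collapses, since the terms $g_M(CY,\phi\mathcal{A}_X V)$ and $g_M(CX,\phi\mathcal{A}_Y V)$ vanish identically. What survives is precisely $g_N((\nabla F_\ast)(Y,BX),F_\ast\phi V)=g_N((\nabla F_\ast)(X,BY),F_\ast\phi V)$, and after replacing $B$ by $\phi$ this reads $g_N((\nabla F_\ast)(Y,\phi X),F_\ast\phi V)=g_N((\nabla F_\ast)(X,\phi Y),F_\ast\phi V)$.

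First I would record that $(\nabla F_\ast)(X,\phi Y)$ and $(\nabla F_\ast)(Y,\phi X)$ are sections of $\varphi^{-1}TN$, and by Remark~2 the tangent bundle of $N$ satisfies $TN=F_\ast(\phi(\ker F_\ast))$. Hence a vector in $TN$ is completely determined by its $g_N$-inner products against the frame $\{F_\ast\phi V : V\in\Gamma(\ker F_\ast)\}$. This is the crucial nondegeneracy step: the equation $g_N(\,\cdot\,,F_\ast\phi V)$-for-all-$V$ from Theorem~4~(ii) upgrades to the genuine equality of vectors $(\nabla F_\ast)(Y,\phi X)=(\nabla F_\ast)(X,\phi Y)$, which is assertion~(ii) of the corollary. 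Thus $(i)\Leftrightarrow(ii)$ follows from Theorem~4 together with Remark~2, with no new computation beyond setting $C=0$.

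For $(ii)\Leftrightarrow(iii)$ I would reuse the last paragraph of the proof of Theorem~4 verbatim. That argument shows, via the definition of the second fundamental form in \eqref{5} and equation \eqref{3}, that
\begin{equation*}
(\nabla F_\ast)(Y,\phi X)-(\nabla F_\ast)(X,\phi Y)=-F_\ast(\mathcal{A}_Y\phi X-\mathcal{A}_X\phi Y),
\end{equation*}
where I have substituted $BX=\phi X$ and $BY=\phi Y$. Since $\mathcal{A}_Y\phi X-\mathcal{A}_X\phi Y$ lies in $\Gamma((\ker F_\ast)^\bot)$ by \eqref{AT2}, and $F_\ast$ is injective on the horizontal distribution (axiom $S2$), the vanishing of the left-hand side is equivalent to $\mathcal{A}_Y\phi X=\mathcal{A}_X\phi Y$, i.e. assertion~(iii).

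The main obstacle, such as it is, lies entirely in the first equivalence: one must justify that the inner-product equality against every $F_\ast\phi V$ genuinely characterizes the vector equality in $TN$. This is exactly where Remark~2 is indispensable, because without $C=0$ the extra summand $F_\ast(\mu)$ in \eqref{A2a} would leave the $\mu$-components of the two second fundamental forms unconstrained by the inner products against $F_\ast\phi V$ alone. Once $C=0$ removes that summand so that $F_\ast\phi(\ker F_\ast)$ spans all of $TN$, the remaining steps are purely formal bookkeeping inherited from Theorem~4.
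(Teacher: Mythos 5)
Your proposal is correct and follows exactly the route the paper intends: the Corollary is stated as an immediate specialization of Theorem 4 via Remark 2 (setting $C=0$, $B=\phi$ on $(\ker F_{\ast })^{\bot }$, and using $TN=F_{\ast }(\phi (\ker F_{\ast }))$ to pass from the inner-product identity to the vector equality). Your explicit justification of that last nondegeneracy step, and of the injectivity of $F_{\ast }$ on the horizontal distribution for $(ii)\Leftrightarrow (iii)$, only makes precise what the paper leaves implicit.
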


\begin{theorem}
Let $F$ be an anti-invariant Riemannian submersion from a cosymplectic
manifold $M(\phi ,\xi ,\eta ,g_{M})$ to a Riemannian manifold $(N,g_{N}).$
Then the following assertions are equivalent to each other;
\end{theorem}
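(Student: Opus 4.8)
The plan is to characterize when the vertical distribution $\ker F_{\ast }$ defines a totally geodesic foliation on $M$, which amounts to showing $g_{M}(\nabla _{V}W,X)=0$ for every $V,W\in \Gamma (\ker F_{\ast })$ and $X\in \Gamma ((\ker F_{\ast })^{\bot })$; since $g_{M}(\nabla _{V}W,X)=g_{M}(\mathcal{T}_{V}W,X)$ by (\ref{1}), this is the same as $\mathcal{T}=0$. I would fix such $V,W,X$ and compute the single scalar $g_{M}(\nabla _{V}W,X)$, expecting the three listed assertions to be successive rewritings of it: $(i)$ is its vanishing, $(ii)$ records it through the second fundamental form $(\nabla F_{\ast })$, and $(iii)$ records it through the O'Neill tensors $\mathcal{T}$ and $\mathcal{A}$.

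The first step is to move the derivative across $\phi $. Because $\xi $ is vertical and $X$ is horizontal, $\eta (X)=g_{M}(X,\xi )=0$, so the compatibility identity (\ref{metric}) gives $g_{M}(\nabla _{V}W,X)=g_{M}(\phi \nabla _{V}W,\phi X)$, and the cosymplectic equation (\ref{nambla}) rewrites this as $g_{M}(\nabla _{V}\phi W,\phi X)$, where $\phi W\in \Gamma (\phi \ker F_{\ast })\subset \Gamma ((\ker F_{\ast })^{\bot })$ is horizontal. Decomposing $\phi X=BX+CX$ by (\ref{A2}), with $BX\in \Gamma (\ker F_{\ast })$ and $CX\in \Gamma (\mu )$, splits the quantity into a vertical pairing $g_{M}(\nabla _{V}\phi W,BX)$ and a horizontal pairing $g_{M}(\nabla _{V}\phi W,CX)$.

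For the vertical pairing I would use (\ref{2}) to identify it with $g_{M}(\mathcal{T}_{V}\phi W,BX)$, and then the skew-symmetry (\ref{4b}) to turn it into $-g_{M}(\mathcal{T}_{V}BX,\phi W)$. For the horizontal pairing the submersion property gives $g_{M}(\nabla _{V}\phi W,CX)=g_{N}(F_{\ast }\nabla _{V}\phi W,F_{\ast }CX)$; here I would use the symmetry of the second fundamental form together with $F_{\ast }V=0$ and the fact that $[V,\phi W]$ is vertical (Lemma 2) to obtain $F_{\ast }\nabla _{V}\phi W=-(\nabla F_{\ast })(V,\phi W)$, which yields the $(\nabla F_{\ast })$-form and hence $(i)\Leftrightarrow (ii)$. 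To pass to $(iii)$ I would instead send the horizontal pairing through the identity $\mathcal{H}\nabla _{V}\phi W=\mathcal{A}_{\phi W}V$ of Lemma 2, replacing the $(\nabla F_{\ast })$-term by one built from $\mathcal{A}$; reading the chain of equalities backwards gives $(ii)\Leftrightarrow (iii)$.

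The routine parts are the projections; the delicate step is the transfer of $F_{\ast }\nabla _{V}\phi W$ to $N$ and the sign it produces. Since $(\nabla F_{\ast })(V,\phi W)=\nabla _{V}^{F}(F_{\ast }\phi W)-F_{\ast }\nabla _{V}\phi W$ carries the awkward term $\nabla _{V}^{F}(F_{\ast }\phi W)$ (as $F_{\ast }\phi W\neq 0$), one cannot use it directly; instead I would swap arguments by symmetry to $(\nabla F_{\ast })(\phi W,V)=-F_{\ast }\nabla _{\phi W}V$ (using $F_{\ast }V=0$) and then replace $\nabla _{\phi W}V$ by $\nabla _{V}\phi W$ modulo the vertical bracket $[V,\phi W]$. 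Keeping track of which arguments are vertical (and so annihilated by $F_{\ast }$) is the main bookkeeping hurdle. Finally, because $\phi (\ker F_{\ast })$ and $\mu $ together span $(\ker F_{\ast })^{\bot }$ by (\ref{A1}), testing $X$ against its $BX$- and $CX$-parts is enough to recover the full vanishing in $(i)$; this is exactly why both $\mathcal{T}$ and $\mathcal{A}$ (equivalently, both pieces of the $(\nabla F_{\ast })$-identity) enter the equivalent conditions.
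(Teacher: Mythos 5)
The statement you were given is the preamble of Theorem 4, whose listed assertions characterize when the \emph{horizontal} distribution $(\ker F_{\ast })^{\bot }$ defines a totally geodesic foliation on $M$; the equivalent conditions are $g_{M}(\mathcal{A}_{X}BY,\phi V)=g_{M}(CY,\phi \mathcal{A}_{X}V)$ and $g_{N}((\nabla F_{\ast })(X,\phi Y),F_{\ast }\phi V)=-g_{M}(CY,\phi \mathcal{A}_{X}V)$ for $X,Y\in \Gamma ((\ker F_{\ast })^{\bot })$ and $V\in \Gamma (\ker F_{\ast })$. Your argument instead characterizes when the \emph{vertical} distribution $\ker F_{\ast }$ is totally geodesic: you compute $g_{M}(\nabla _{V}W,X)$ for vertical $V,W$ and reduce it (correctly, and by essentially the paper's own route) to $-g_{M}(\phi W,\mathcal{T}_{V}BX+\mathcal{A}_{CX}V)$. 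That is the content and proof of the companion Theorem 5, not of the statement at hand. The two results are genuinely different: $\ker F_{\ast }$ is automatically integrable and its geometry is governed by the fibre second fundamental form $\mathcal{T}$, whereas for $(\ker F_{\ast })^{\bot }$ the relevant tensor is $\mathcal{A}$, and neither condition implies the other in general. So, as a proof of the stated theorem, the attempt misses the target.

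To prove the intended statement you must start from the other scalar, $g_{M}(\nabla _{X}Y,V)$ with $X,Y$ horizontal and $V$ vertical. The opening moves are the ones you describe --- (\ref{metric}) and (\ref{nambla}) give $g_{M}(\nabla _{X}Y,V)=g_{M}(\nabla _{X}\phi Y,\phi V)$, and one decomposes $\phi Y=BY+CY$ by (\ref{A2}) --- but now the vertical part $BY$ is differentiated in a \emph{horizontal} direction, so (\ref{3}) produces the term $g_{M}(\mathcal{A}_{X}BY,\phi V)$ rather than a $\mathcal{T}$-term, and the $CY$-part is handled by the auxiliary identity (\ref{C3}), $g_{M}(\nabla _{X}CY,\phi V)=-g_{M}(CY,\phi \mathcal{A}_{X}V)$, which your proposal never invokes; this yields $(i)\Leftrightarrow (ii)$. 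The passage to the $(\nabla F_{\ast })$-form of condition $(iii)$ is then the same kind of sign bookkeeping you carry out for the vertical case (using (\ref{5}) and (\ref{5a}) to trade $F_{\ast }\nabla _{X}BY$ for $-(\nabla F_{\ast })(X,BY)$). Your handling of the delicate step $F_{\ast }\nabla _{V}\phi W=-(\nabla F_{\ast })(V,\phi W)$ via the symmetry of the second fundamental form and the vertical bracket is sound, so the machinery is all in place --- it has simply been applied to the wrong distribution.
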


$\ i)$ $(\ker F_{\ast })^{\bot }$ \textit{defines a totally geodesic
foliation on }$M.$

$\ ii)$ 
\begin{equation*}
g_{M}(\mathcal{A}_{X}BY,\phi V)=g_{M}(CY,\phi \mathcal{A}_{X}V).
\end{equation*}

$iii)$%
\begin{equation*}
g_{N}((\nabla F_{\ast })(X,\phi Y),F_{\ast }\phi V)=-g_{M}(CY,\phi \mathcal{A%
}_{X}V).
\end{equation*}%
\textit{for }$X,Y\in \Gamma ((\ker F_{\ast })^{\bot })$\textit{\ and }$V\in
\Gamma (\ker F_{\ast }).$

\begin{proof}
From (\ref{metric}) and (\ref{nambla}) we obtain%
\begin{equation*}
g_{M}(\nabla _{X}Y,V)=g_{M}(\nabla _{X}\phi Y,\phi V)
\end{equation*}%
for\textit{\ }$X,Y\in \Gamma ((\ker F_{\ast })^{\bot })$\textit{\ }and%
\textit{\ }$V\in \Gamma (\ker F_{\ast }).$Using (\ref{3}) and (\ref{A2})%
\begin{equation*}
g_{M}(\nabla _{X}Y,V)=g_{M}(\mathcal{A}_{X}BY+\mathcal{V}\nabla _{X}BY,\phi
V)-g_{M}(CY,\phi \mathcal{A}_{X}V).
\end{equation*}

The last equation shows $(i)\Leftrightarrow (ii)$.

For $X,Y\in \Gamma ((\ker F_{\ast })^{\bot })$\textit{\ }and\textit{\ }$V\in
\Gamma (\ker F_{\ast }),$%
\begin{eqnarray}
g_{M}(\mathcal{A}_{X}BY,\phi V) &=&g_{M}(CY,\phi \mathcal{A}_{X}V)  \notag \\
&&\overset{(\ref{C3})}{=}-g_{M}(\nabla _{X}CY,\phi V)  \notag \\
&&\overset{(\ref{A2})}{=}-g_{M}(\nabla _{X}\phi Y,\phi V)+g_{M}(\nabla
_{X}BY,\phi V)  \label{B6}
\end{eqnarray}%
Since differential $F_{\ast }$ preserves the lenghts of horizontal vectors
the relation (\ref{B6}) forms%
\begin{equation}
g_{M}(\mathcal{A}_{X}BY,\phi V)=g_{N}(F_{\ast }\nabla _{X}BY,F_{\ast }\phi
V)-g_{M}(\nabla _{X}\phi Y,\phi V)  \label{B7}
\end{equation}%
Using, (\ref{nambla}), (\ref{metric}), (\ref{5}) and (\ref{5a}) in (\ref{B7}%
) respectively, we obtain%
\begin{equation*}
g_{M}(\mathcal{A}_{X}BY,\phi V)=g_{N}(-(\nabla F_{\ast })(X,\phi Y),F_{\ast
}\phi V)
\end{equation*}%
which tells that $(ii)\Leftrightarrow (iii).$
\end{proof}

\begin{corollary}
Let $F:M(\phi ,\xi ,\eta ,g_{M})\rightarrow $ $(N,g_{N})$ be an
anti-invariant Riemannian submersion such that $\phi (\ker F_{\ast })=(\ker
F_{\ast })^{\bot },$ where $M(\phi ,\xi ,\eta ,g_{M})$ is a cosymplectic
manifold and $(N,g_{N})$ is a Riemannian manifold. Then the following
assertions are equivalent to each other;

$\ i)$ $(\ker F_{\ast })^{\bot }$ \textit{defines a totally geodesic
foliation on }$M.$

$ii)$ $\mathcal{A}_{X}\phi Y=0.$

$iii)$ $(\nabla F_{\ast })(X,\phi Y)=0$ \textit{for }$X,Y\in \Gamma ((\ker
F_{\ast })^{\bot })$\textit{\ and }$V\in \Gamma (\ker F_{\ast })$.
\end{corollary}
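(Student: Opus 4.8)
The plan is to read this corollary off directly from Theorem 4 by specializing to the hypothesis $\phi(\ker F_{\ast})=(\ker F_{\ast})^{\bot}$, which by Remark 3 forces $C=0$ (equivalently $\mu=\{0\}$) together with $TN=F_{\ast}(\phi(\ker F_{\ast}))$. First I would substitute $C=0$ into the two characterizing identities (ii) and (iii) of Theorem 4. On their right-hand sides the term $g_{M}(CY,\phi\mathcal{A}_{X}V)$ vanishes identically, and on the left $BY=\phi Y$ because $\phi Y=BY+CY=BY$. Thus Theorem 4 collapses to the assertions $g_{M}(\mathcal{A}_{X}\phi Y,\phi V)=0$ and $g_{N}((\nabla F_{\ast})(X,\phi Y),F_{\ast}\phi V)=0$, each required to hold for all $V\in\Gamma(\ker F_{\ast})$. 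The remaining task is to strip off the test field $\phi V$, respectively $F_{\ast}\phi V$, from these inner products.

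The second step is a non-degeneracy argument. Since $C=0$ we have $\phi Y=BY\in\Gamma(\ker F_{\ast})$, so $\phi Y$ is vertical; as $\mathcal{A}_{X}$ reverses the horizontal and vertical distributions, $\mathcal{A}_{X}\phi Y$ is horizontal, i.e. $\mathcal{A}_{X}\phi Y\in\Gamma((\ker F_{\ast})^{\bot})$. On the other hand, as $V$ runs over $\Gamma(\ker F_{\ast})$ the fields $\phi V$ span $\phi(\ker F_{\ast})=(\ker F_{\ast})^{\bot}$ (the only loss is $\phi\xi=0$, and $\xi$ is vertical by Theorem 1, so no horizontal direction is missed). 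Hence $g_{M}(\mathcal{A}_{X}\phi Y,\phi V)=0$ for every $V$ forces the horizontal vector $\mathcal{A}_{X}\phi Y$ to be orthogonal to all of $(\ker F_{\ast})^{\bot}$, whence $\mathcal{A}_{X}\phi Y=0$. This establishes $(i)\Leftrightarrow(ii)$.

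For $(i)\Leftrightarrow(iii)$ I would argue identically on the $N$-side: by Remark 3 the images $F_{\ast}\phi V$ span $F_{\ast}(\phi(\ker F_{\ast}))=TN$, so the vanishing of $g_{N}((\nabla F_{\ast})(X,\phi Y),F_{\ast}\phi V)$ for every $V$ is equivalent to $(\nabla F_{\ast})(X,\phi Y)=0$ in $TN$. Combining the two equivalences with Theorem 4 yields the desired three-way equivalence.

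The only point requiring care, and the main obstacle, is precisely this non-degeneracy step: one must verify that the target vectors genuinely lie in the subspace spanned by the test vectors, namely $\mathcal{A}_{X}\phi Y\in(\ker F_{\ast})^{\bot}$ and $(\nabla F_{\ast})(X,\phi Y)\in TN$, and that the test vectors $\phi V$, respectively $F_{\ast}\phi V$, exhaust that subspace. Both facts follow cleanly from $C=0$ together with Theorem 1 and Remark 3, so no genuinely new computation is needed beyond Theorem 4 itself.
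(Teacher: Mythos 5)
Your proposal is correct and follows exactly the route the paper intends: the corollary is stated without explicit proof as a specialization of Theorem 4 using Remark 2 ($\phi(\ker F_{\ast})=(\ker F_{\ast})^{\bot}$ forces $C=0$, $B=\phi$ on the horizontal distribution, and $TN=F_{\ast}(\phi(\ker F_{\ast}))$). Your added non-degeneracy step — observing that $\mathcal{A}_{X}\phi Y$ is horizontal and that the test vectors $\phi V$ (resp. $F_{\ast}\phi V$) span $(\ker F_{\ast})^{\bot}$ (resp. $TN$) — is precisely the detail the authors leave implicit, and it is sound.
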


\begin{theorem}
Let $F$ be an anti-invariant Riemannian submersion from a cosymplectic
manifold $M(\phi ,\xi ,\eta ,g_{M})$ to a Riemannian manifold $(N,g_{N}).$
Then the following assertions are equivalent to each other;

$i)$ $(\ker F_{\ast })$ defines a totally geodesic foliation on $M$ .

$ii)$ $g_{N}((\nabla F_{\ast })(V,\phi X),F_{\ast }\phi W)=0$ \ \ for $X\in
\Gamma ((\ker F_{\ast })^{\bot })$ and $V,W\in \Gamma (\ker F_{\ast }).$

$iii)$ $\mathcal{T}_{V}BX+\mathcal{A}_{CX}V\in \Gamma (\mu ).$
\end{theorem}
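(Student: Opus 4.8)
The plan is to reduce the totally geodesic condition for the vertical foliation to a single scalar and then rewrite that scalar in two ways: once through the O'Neill tensors $\mathcal{T},\mathcal{A}$ to reach $(iii)$, and once through the second fundamental form $(\nabla F_{\ast})$ to reach $(ii)$. Since $\ker F_{\ast}$ is integrable, it defines a totally geodesic foliation on $M$ exactly when $\nabla _{V}W\in \Gamma (\ker F_{\ast })$ for all $V,W\in \Gamma (\ker F_{\ast })$, i.e. when $g_{M}(\nabla _{V}W,X)=0$ for every $X\in \Gamma ((\ker F_{\ast })^{\bot })$. So the whole proof revolves around computing $g_{M}(\nabla _{V}W,X)$.

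First I would run the cosymplectic computation. Because $\xi $ is vertical and $X$ is horizontal we have $\eta (X)=0$, so \eqref{metric} gives $g_{M}(\nabla _{V}W,X)=g_{M}(\phi \nabla _{V}W,\phi X)$, and \eqref{nambla} converts $\phi \nabla _{V}W$ into $\nabla _{V}\phi W$. Since $\phi W$ is horizontal we have $g_{M}(\phi W,\phi X)=0$, and differentiating this along $V$ yields
\[
g_{M}(\nabla _{V}W,X)=-g_{M}(\phi W,\nabla _{V}\phi X).
\]
Now I substitute $\phi X=BX+CX$ from \eqref{A2} and keep only the horizontal contributions paired against $\phi W$: by \eqref{1} the horizontal part of $\nabla _{V}BX$ is $\mathcal{T}_{V}BX$, and by the relation $\mathcal{H}\nabla _{V}CX=\mathcal{A}_{CX}V$ for basic fields the horizontal part of $\nabla _{V}CX$ is $\mathcal{A}_{CX}V$. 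This gives the master identity
\[
g_{M}(\nabla _{V}W,X)=-g_{M}(\phi W,\mathcal{T}_{V}BX+\mathcal{A}_{CX}V).
\]
Both $\mathcal{T}_{V}BX$ and $\mathcal{A}_{CX}V$ are horizontal, so their sum is orthogonal to $\phi \ker F_{\ast }$, that is lies in $\mu $ by \eqref{A1}, precisely when the left-hand side vanishes for every vertical $W$; as $\phi W$ sweeps out $\phi \ker F_{\ast }$ this establishes $(i)\Leftrightarrow (iii)$.

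For $(i)\Leftrightarrow (ii)$ I would transport the master identity to $N$. Since $F$ is a Riemannian submersion and both $\phi W$ and $\mathcal{T}_{V}BX+\mathcal{A}_{CX}V$ are horizontal, $g_{M}(\phi W,\mathcal{T}_{V}BX+\mathcal{A}_{CX}V)=g_{N}(F_{\ast }\phi W,F_{\ast }\nabla _{V}\phi X)$, the last step because $F_{\ast }$ records only the horizontal part of $\nabla _{V}\phi X$. Writing $F_{\ast }\nabla _{V}\phi X=\nabla _{V}^{\varphi }F_{\ast }\phi X-(\nabla F_{\ast })(V,\phi X)$ via \eqref{5} then reduces matters to showing that the extra term pairs to zero, i.e. $g_{N}(\nabla _{V}^{\varphi }F_{\ast }\phi X,F_{\ast }\phi W)=0$. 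I would dispose of this by noting $g_{N}(F_{\ast }\phi X,F_{\ast }\phi W)=g_{M}(CX,\phi W)=0$ identically (as $CX\in \mu $ is orthogonal to $\phi W\in \phi \ker F_{\ast }$); since the identity depends on $W$ only through the horizontal field $\phi W$ and is tensorial in it, I may choose a basic extension of $\phi W$, whence $\nabla _{V}^{\varphi }F_{\ast }\phi W=0$, and metric compatibility of the pullback connection forces $g_{N}(\nabla _{V}^{\varphi }F_{\ast }\phi X,F_{\ast }\phi W)=0$. The master identity then reads $g_{M}(\nabla _{V}W,X)=g_{N}((\nabla F_{\ast })(V,\phi X),F_{\ast }\phi W)$, giving $(i)\Leftrightarrow (ii)$ and completing the equivalence.

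The main obstacle is exactly this connection term $\nabla _{V}^{\varphi }F_{\ast }\phi X$. One is tempted to kill it by \eqref{5a}, but that identity concerns $(\nabla F_{\ast })$ on two horizontal arguments, whereas here the first slot $V$ is vertical, so \eqref{5a} does not apply; its orthogonality to $F_{\ast }\phi W$ must instead be extracted from the tensoriality in $\phi W$ together with the basic-extension/metric-compatibility device above. Everything else is routine bookkeeping with \eqref{metric}, \eqref{nambla}, \eqref{1}, \eqref{A2} and the O'Neill relations, so I expect the write-up to be short once that single point is settled.
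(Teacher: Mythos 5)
Your reduction of $(i)$ to the vanishing of $g_{M}(\nabla _{V}W,X)$ and your master identity $g_{M}(\nabla _{V}W,X)=-g_{M}(\phi W,\mathcal{T}_{V}BX+\mathcal{A}_{CX}V)$ are exactly the paper's computation (the paper reaches the same expression by writing $\nabla _{V}CX=\left[ V,CX\right] +\nabla _{CX}V$ and discarding the vertical bracket), and the way you read off $(i)\Leftrightarrow (iii)$ from it is correct. The one place you depart from the paper is the step you yourself single out, the disposal of $g_{N}(\nabla _{V}^{\varphi }F_{\ast }\phi X,F_{\ast }\phi W)$, and there your argument does not go through as written. You differentiate the identity $g_{N}(F_{\ast }\phi X,F_{\ast }\phi W)=g_{M}(CX,\phi W)=0$ along $V$ after replacing $\phi W$ by a basic extension; but a basic field through $(\phi W)_{p}$ agrees with $\phi W$ only at the point $p$ and need not remain in $\phi (\ker F_{\ast })$ nearby, so $g_{M}(CX,\cdot )$ evaluated on it vanishes at $p$ without vanishing identically, and its $V$-derivative at $p$ (which is what metric compatibility actually hands you) is precisely the uncontrolled quantity: for a basic $\tilde{Z}$ with $\tilde{Z}_{p}=(\phi W)_{p}$ one finds $V_{p}(g_{M}(CX,\tilde{Z}))=g_{M}(\mathcal{H}\nabla _{V}CX,\phi W)+g_{M}(CX,\mathcal{A}_{\phi W}V)$, which has no reason to vanish when $\mu \neq \{0\}$.

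The intended (and standard) way to kill the connection term is not a pairing argument at all: since $(\nabla F_{\ast })$ is tensorial and both sides of the identity $g_{M}(\nabla _{V}W,X)=g_{N}((\nabla F_{\ast })(V,\phi X),F_{\ast }\phi W)$ are tensorial in $X$, one may compute with $CX$ basic, and then $F_{\ast }\phi X=F_{\ast }CX$ is the pullback of a vector field on $N$, so $\nabla _{V}^{\varphi }F_{\ast }CX=\nabla _{F_{\ast }V}^{N}(\cdot )=0$ outright because $F_{\ast }V=0$; no orthogonality argument is needed. This is what the paper is implicitly doing when it equates $g_{N}(F_{\ast }\phi W,(\nabla F_{\ast })(V,\phi X))$ with $-g_{M}(\phi W,\mathcal{H}\nabla _{V}\phi X)$. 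With that replacement your proof closes and coincides with the paper's; as written, the metric-compatibility device is the one genuine gap.
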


\begin{proof}
Since $g_{M}(W,X)=0$ we have $g_{M}(\nabla _{V}W,X)=-g(W,\nabla _{V}X).$
From (\ref{metric}) and (\ref{nambla}) we get $g_{M}(\nabla
_{V}W,X)=-g_{M}(\phi W,H\nabla _{V}\phi X).$ Then Riemannian submersion $F$
and (\ref{5}) imply that 
\begin{equation*}
g_{M}(\nabla _{V}W,X)=g_{N}(F_{\ast }\phi W,(\nabla F_{\ast })(V,\phi X))
\end{equation*}%
which is $(i)\Leftrightarrow (ii).$ By direct calculation, we derive%
\begin{equation*}
g_{N}((F_{\ast }\phi W,(\nabla F_{\ast })(V,\phi X))=-g_{M}(\phi W,\nabla
_{V}\phi X).
\end{equation*}%
Using (\ref{A2}) we have 
\begin{equation*}
g_{N}((F_{\ast }\phi W,(\nabla F_{\ast })(V,\phi X))=-g_{M}(\phi W,\nabla
_{V}BX+\nabla _{V}CX).
\end{equation*}%
Hence we get%
\begin{equation*}
g_{N}((F_{\ast }\phi W,(\nabla F_{\ast })(V,\phi X))=-g_{M}(\phi W,\nabla
_{V}BX+\left[ V,CX\right] +\nabla _{CX}V).
\end{equation*}%
Since $\left[ V,CX\right] \in \Gamma (\ker F_{\ast }),$ using (\ref{1}) and (%
\ref{3}), we obtain%
\begin{equation*}
g_{N}((F_{\ast }\phi W,(\nabla F_{\ast })(V,\phi X))=-g_{M}(\phi W,\mathcal{T%
}_{V}BX+\mathcal{A}_{CX}V).
\end{equation*}%
This shows $(ii)\Leftrightarrow (iii).$
\end{proof}

\begin{corollary}
Let $F:M(\phi ,\xi ,\eta ,g_{M})\rightarrow $ $(N,g_{N})$ be an
anti-invariant Riemannian submersion such that $\phi (\ker F_{\ast })=(\ker
F_{\ast })^{\bot },$ where $M(\phi ,\xi ,\eta ,g_{M})$ is a cosymplectic
manifold and $(N,g_{N})$ is a Riemannian manifold. Then following assertions
are equivalent to each other;

$i)(\ker F_{\ast })$ \textit{defines a totally geodesic foliation on }$M.$

$ii)(\nabla F_{\ast })(V,\phi X)=0,$ for $X\in \Gamma ((\ker F_{\ast
})^{\bot })$ and $V,W\in \Gamma (\ker F_{\ast }).$

$iii)$ $\mathcal{T}_{V}\phi W=0.$
\end{corollary}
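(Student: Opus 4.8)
The plan is to obtain this Corollary as the specialization of Theorem 5 to the case $\phi (\ker F_{\ast })=(\ker F_{\ast })^{\bot }$, with Remark 2 used to simplify each of the three conditions. First I would record the standard reformulation of $i)$: a foliation is totally geodesic exactly when the second fundamental form of its leaves vanishes, so $(\ker F_{\ast })$ defines a totally geodesic foliation on $M$ if and only if $\mathcal{T}_{V}W=0$ for all $V,W\in \Gamma (\ker F_{\ast })$.

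For the equivalence $i)\Leftrightarrow ii)$ I would start from the equivalence $i)\Leftrightarrow g_{N}((\nabla F_{\ast })(V,\phi X),F_{\ast }\phi W)=0$ already established in Theorem 5. Since $\phi (\ker F_{\ast })=(\ker F_{\ast })^{\bot }$, Remark 2 gives $C=0$ and $TN=F_{\ast }(\phi (\ker F_{\ast }))$, so as $W$ runs over $\Gamma (\ker F_{\ast })$ the vectors $F_{\ast }\phi W$ span all of $TN$. Because $(\nabla F_{\ast })(V,\phi X)$ is a section of $F^{-1}TN$ and hence $TN$-valued, its pairing with a spanning family vanishing is equivalent to its own vanishing, which yields $i)\Leftrightarrow ii)$.

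For $i)\Leftrightarrow iii)$ I would exploit that $\phi $ carries $\ker F_{\ast }$ onto $(\ker F_{\ast })^{\bot }$. Condition $iii)$ reads $\mathcal{T}_{V}\phi W=0$; here $\phi W$ is horizontal, and as $W$ ranges over $\Gamma (\ker F_{\ast })$ the vector $\phi W$ ranges over all of $(\ker F_{\ast })^{\bot }$, so $iii)$ says precisely that $\mathcal{T}_{V}$ annihilates every horizontal vector. On the other hand $\mathcal{T}_{V}W$ (with $W$ vertical) is horizontal, hence $\mathcal{T}_{V}W=0$ is equivalent to $g_{M}(\mathcal{T}_{V}W,\phi W^{\prime })=0$ for all $W^{\prime }\in \Gamma (\ker F_{\ast })$. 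Invoking the skew-symmetry relation (\ref{4b}), $g_{M}(\mathcal{T}_{V}W,\phi W^{\prime })=-g_{M}(\mathcal{T}_{V}\phi W^{\prime },W)$, and since $\mathcal{T}_{V}\phi W^{\prime }$ is vertical this vanishes for all vertical $W$ exactly when $\mathcal{T}_{V}\phi W^{\prime }=0$. Thus $\mathcal{T}_{V}$ vanishes on the vertical distribution if and only if it vanishes on the horizontal one, which is $i)\Leftrightarrow iii)$.

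The tensoriality of $\mathcal{T}$ and the adjointness encoded in (\ref{4b}) are immediate; the only point demanding care is the bookkeeping of which distribution each vector lands in — in particular the fact that $\phi W$ sweeps out all horizontal directions, which is exactly where the hypothesis $\phi (\ker F_{\ast })=(\ker F_{\ast })^{\bot }$ enters, together with the dual roles played by the restrictions of $\mathcal{T}_{V}$ to $\mathcal{V}$ and to $\mathcal{H}$ under (\ref{4b}). I expect this translation between the vertical and horizontal restrictions of $\mathcal{T}_{V}$ to be the main, though mild, obstacle.
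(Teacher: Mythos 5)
Your proposal is correct and follows essentially the route the paper intends: the corollary is the specialization of Theorem 5 under $\phi(\ker F_{\ast})=(\ker F_{\ast})^{\bot}$, where Remark 2 gives $C=0$ and $TN=F_{\ast}(\phi(\ker F_{\ast}))$ so that the $F_{\ast}\phi W$ span $TN$ and the $\phi W$ span the horizontal distribution. Your use of the skew-symmetry relation (\ref{4b}) to pass between the vanishing of $\mathcal{T}_{V}$ on vertical and on horizontal vectors is exactly the (unwritten) step needed to turn the specialized condition of Theorem 5 into the stated form $\mathcal{T}_{V}\phi W=0$, so the argument is a valid and complete filling-in of the proof the paper omits.
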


We note that a differentiable map $F$ between two Riemannian manifolds is
called totally geodesic if $\ \nabla F_{\ast }=0.$ For\ an anti-invariant
Riemannian submersion such that $\phi (\ker F_{\ast })=(\ker F_{\ast
})^{\bot }$ we have the following characterization.

\begin{theorem}
Let $F:M(\phi ,\xi ,\eta ,g_{M})\rightarrow $ $(N,g_{N})$ be an
anti-invariant Riemannian submersion such that $\phi (\ker F_{\ast })=(\ker
F_{\ast })^{\bot },$ where $M(\phi ,\xi ,\eta ,g_{M})$ is a cosymplectic
manifold and $(N,g_{N})$ is a Riemannian manifold. Then $F$ is a totally
geodesic map if and only if%
\begin{equation}
\mathcal{T}_{W}\phi V=0,\text{ \ \ }\forall W,\text{ }V\in \Gamma (\ker
F_{\ast })  \label{BE1}
\end{equation}%
and%
\begin{equation}
\mathcal{A}_{X}\phi W=0,\text{ \ }\forall X\in \Gamma ((\ker F_{\ast
})^{\bot }),\forall W\in \Gamma (\ker F_{\ast }).\text{\ }  \label{BE2}
\end{equation}
\end{theorem}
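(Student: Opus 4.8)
The plan is to use the standard characterization that $F$ is totally geodesic if and only if its second fundamental form $\nabla F_{\ast}$ vanishes identically, and to exploit that $\nabla F_{\ast}$ is symmetric and bilinear in order to reduce the verification to a small number of cases. Since $F_{\ast}$ annihilates vertical vectors and, by \eqref{5a}, $(\nabla F_{\ast})(X,Y)=0$ already holds for $X,Y\in\Gamma((\ker F_{\ast})^{\bot})$, the only components that can obstruct $\nabla F_{\ast}=0$ are $(\nabla F_{\ast})(V,W)$ with $V,W\in\Gamma(\ker F_{\ast})$ and the mixed term $(\nabla F_{\ast})(X,V)$ with $X\in\Gamma((\ker F_{\ast})^{\bot})$, $V\in\Gamma(\ker F_{\ast})$. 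Throughout I will use the structural fact that $\phi(\ker F_{\ast})=(\ker F_{\ast})^{\bot}$, so every horizontal vector field is of the form $\phi V$ for some vertical $V$; this is what lets one pass freely between the vertical and horizontal distributions.

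For the purely vertical term, I would start from the definition \eqref{5}: since $F_{\ast}W=0$, one gets $(\nabla F_{\ast})(V,W)=-F_{\ast}(\nabla_{V}W)$, and by \eqref{1} the vertical part $\hat{\nabla}_{V}W$ is killed by $F_{\ast}$, leaving $(\nabla F_{\ast})(V,W)=-F_{\ast}(\mathcal{T}_{V}W)$. As $F_{\ast}$ is a linear isometry on horizontal vectors and $\mathcal{T}_{V}W$ is horizontal, this vanishes precisely when $\mathcal{T}_{V}W=0$. An identical argument using \eqref{3} gives $(\nabla F_{\ast})(X,V)=-F_{\ast}(\mathcal{A}_{X}V)$, which vanishes precisely when $\mathcal{A}_{X}V=0$. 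Thus $\nabla F_{\ast}=0$ is equivalent to the pair of conditions $\mathcal{T}_{V}W=0$ and $\mathcal{A}_{X}V=0$ for all vertical $V,W$ and horizontal $X$.

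The heart of the proof is then to match these two conditions with \eqref{BE1} and \eqref{BE2}. Using the cosymplectic identity \eqref{nambla}, i.e. $\nabla_{E}\phi=\phi\nabla_{E}$, together with the skew-symmetry $g_{M}(\phi A,B)=-g_{M}(A,\phi B)$ (a direct consequence of \eqref{metric}), I would establish the two pointwise identities $g_{M}(\mathcal{T}_{W}\phi V,U)=-g_{M}(\mathcal{T}_{V}W,\phi U)$ and $g_{M}(\mathcal{A}_{X}\phi V,U)=-g_{M}(\mathcal{A}_{X}V,\phi U)$, valid for all vertical $U,V,W$ and horizontal $X$. Each is obtained by rewriting the left-hand side as $g_{M}(\nabla_{\cdot}\phi\,\cdot\,,\cdot)$, moving $\phi$ across by skew-symmetry, pulling it through the connection by cosymplecticity, and discarding the components orthogonal to the relevant distribution via \eqref{1}, \eqref{3}, \eqref{4}. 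Since $\{\phi U:U\in\Gamma(\ker F_{\ast})\}$ spans $(\ker F_{\ast})^{\bot}$, and since $\mathcal{T}_{V}W,\mathcal{A}_{X}V$ are horizontal while $\mathcal{T}_{W}\phi V,\mathcal{A}_{X}\phi V$ are vertical, these identities promote to the equivalences $\mathcal{T}_{V}W=0\Leftrightarrow\mathcal{T}_{W}\phi V=0$ and $\mathcal{A}_{X}V=0\Leftrightarrow\mathcal{A}_{X}\phi V=0$. Combining with the reduction of the previous paragraph yields both directions of the theorem.

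I expect the main obstacle to be the careful bookkeeping of the horizontal and vertical projections in deriving the two bracketed identities, rather than any conceptual difficulty; in particular one must keep track of which term of each O'Neill decomposition survives a given pairing. The degenerate case $V=\xi$ should be recorded as automatically consistent: there $\phi\xi=0$ makes the conditions \eqref{BE1}, \eqref{BE2} trivial, while \eqref{C11} and \eqref{C1,5} force the corresponding components $\mathcal{T}_{V}W$ and $\mathcal{A}_{X}V$ involving $\xi$ to vanish as well, so the equivalences hold on all of $\Gamma(\ker F_{\ast})$.
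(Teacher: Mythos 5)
Your proposal is correct and follows essentially the same route as the paper: both reduce $\nabla F_{\ast}=0$ to the horizontal--horizontal part handled by \eqref{5a} plus the vertical--vertical and mixed parts expressed through $\mathcal{T}$ and $\mathcal{A}$, and both use the cosymplectic identity \eqref{nambla} together with $\phi (\ker F_{\ast })=(\ker F_{\ast })^{\bot }$ to translate these into \eqref{BE1} and \eqref{BE2}. The only cosmetic difference is that the paper computes $(\nabla F_{\ast })(W,V)=F_{\ast }(\phi \mathcal{T}_{W}\phi V)$ and $(\nabla F_{\ast })(X,W)=F_{\ast }(\phi \mathcal{A}_{X}\phi W)$ in one pass and then invokes the non-singularity of $\phi $ with \eqref{4b}--\eqref{4c}, whereas you first obtain $-F_{\ast }(\mathcal{T}_{W}V)$ and $-F_{\ast }(\mathcal{A}_{X}V)$ and then convert via the adjointness identities $g_{M}(\mathcal{T}_{W}\phi V,U)=-g_{M}(\mathcal{T}_{V}W,\phi U)$ and $g_{M}(\mathcal{A}_{X}\phi V,U)=-g_{M}(\mathcal{A}_{X}V,\phi U)$; the content is identical.
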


\begin{proof}
First of all, we recall that the second fundamental form of a Riemannian
submersion satisfies (\ref{5a}). For $W,$ $V\in \Gamma (\ker F_{\ast }),$ by
using (\ref{2}), (\ref{5}), (\ref{fi}) and (\ref{nambla}), we get%
\begin{equation}
(\nabla F_{\ast })(W,V)=F_{\ast }(\phi \mathcal{T}_{W}\phi V).  \label{BE3}
\end{equation}%
On the other hand by using (\ref{5}) and (\ref{nambla}) we have%
\begin{equation*}
(\nabla F_{\ast })(X,W)=F_{\ast }(\phi \nabla _{X}\phi W)
\end{equation*}%
for $X\in \Gamma ((\ker F_{\ast })^{\bot }).$ Then from (\ref{4}) and (\ref%
{fi}), we obtain 
\begin{equation}
(\nabla F_{\ast })(X,W)=F_{\ast }(\phi \mathcal{A}_{X}\phi W).  \label{BE4}
\end{equation}%
Since $\phi $ is non-singular, using (\ref{4b}) and (\ref{4c}) proof comes
from (\ref{5a}), (\ref{BE3}) and (\ref{BE4}).
\end{proof}

Finally, we give a necessary and sufficient condition for an anti-invariant
Riemannian submersion such that $\phi (\ker F_{\ast })=(\ker F_{\ast
})^{\bot }$ to be harmonic.

\begin{theorem}
Let $F:M(\phi ,\xi ,\eta ,g_{M})\rightarrow $ $(N,g_{N})$ be an
anti-invariant Riemannian submersion such that $\phi (\ker F_{\ast })=(\ker
F_{\ast })^{\bot },$ where $M(\phi ,\xi ,\eta ,g_{M})$ is a cosymplectic
manifold and $(N,g_{N})$ is a Riemannian manifold. Then $F$ is harmonic if
and only if Trace$\phi \mathcal{T}_{V}=0$ for $V\in \Gamma (\ker F_{\ast }).$
\end{theorem}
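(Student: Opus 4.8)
The plan is to reduce harmonicity of $F$ to the vanishing of its tension field $\tau(F)$ (see (\ref{6})), to compute $\tau(F)$ explicitly against a frame adapted to the splitting $TM=\ker F_{\ast}\oplus(\ker F_{\ast})^{\bot}$, and then to transport the resulting condition across $\phi$. Since $\phi(\ker F_{\ast})=(\ker F_{\ast})^{\bot}$, Theorem 1 tells us that $\xi$ is vertical, that $m=n$, and hence that $\dim\ker F_{\ast}=m+1$ while $\dim(\ker F_{\ast})^{\bot}=m$. I would fix an orthonormal frame $\{V_{1},\dots,V_{m+1}\}$ of $\ker F_{\ast}$, taking $V_{m+1}=\xi$, together with an orthonormal frame $\{X_{1},\dots,X_{m}\}$ of $(\ker F_{\ast})^{\bot}$, so that their union is an orthonormal frame of $M$ to be substituted into (\ref{6}).

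First I would discard the horizontal contribution: for horizontal $X$ one has $(\nabla F_{\ast})(X,X)=0$ by (\ref{5a}), so only the vertical terms survive and $\tau(F)=\sum_{i=1}^{m+1}(\nabla F_{\ast})(V_{i},V_{i})$. Next I would evaluate each vertical term using (\ref{5}): since $F_{\ast}V_{i}=0$ we get $(\nabla F_{\ast})(V_{i},V_{i})=-F_{\ast}(\nabla_{V_{i}}V_{i})$, and then (\ref{1}) together with $F_{\ast}\hat{\nabla}_{V_{i}}V_{i}=0$ gives $(\nabla F_{\ast})(V_{i},V_{i})=-F_{\ast}(\mathcal{T}_{V_{i}}V_{i})$. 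Summing yields $\tau(F)=-F_{\ast}\big(\sum_{i=1}^{m+1}\mathcal{T}_{V_{i}}V_{i}\big)$; note the $\xi$-term drops by (\ref{C1,5}), i.e. $\mathcal{T}_{\xi}\xi=0$, though this is not needed for the argument.

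To finish I would invoke two injectivity facts. Each $\mathcal{T}_{V_{i}}V_{i}$ is horizontal, because $\mathcal{T}$ reverses the two distributions, and $F_{\ast}$ restricts to a linear isomorphism on $(\ker F_{\ast})^{\bot}$, so $\tau(F)=0$ if and only if $\sum_{i}\mathcal{T}_{V_{i}}V_{i}=0$. Finally, $\phi$ is injective on $(\ker F_{\ast})^{\bot}$, since by (\ref{fi}) its kernel is $\mathrm{span}\,\xi\subset\ker F_{\ast}$; hence $\sum_{i}\mathcal{T}_{V_{i}}V_{i}=0$ is equivalent to $\phi\sum_{i}\mathcal{T}_{V_{i}}V_{i}=\sum_{i}\phi\mathcal{T}_{V_{i}}V_{i}=0$, which is precisely $\mathrm{Trace}\,\phi\mathcal{T}_{V}=0$. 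I expect the only delicate point to be the bookkeeping in the tension-field computation, namely verifying that the horizontal part of the sum in (\ref{6}) genuinely vanishes and that the maps $\phi$ and $F_{\ast}$ used are invertible on the relevant subspaces; once these are in place, the passage between $\sum_{i}\mathcal{T}_{V_{i}}V_{i}=0$ and $\mathrm{Trace}\,\phi\mathcal{T}_{V}=0$ is immediate.
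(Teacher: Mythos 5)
Your reduction of harmonicity to the minimality of the fibres is correct and complete: the horizontal terms of the tension field (\ref{6}) vanish by (\ref{5a}), the vertical terms give $-F_{\ast}(\mathcal{T}_{V_{i}}V_{i})$ via (\ref{5}) and (\ref{1}), and the injectivity of $F_{\ast}$ on $(\ker F_{\ast })^{\bot }$ then yields that $\tau (F)=0$ iff $\sum_{i}\mathcal{T}_{V_{i}}V_{i}=0$. (The paper simply cites Eells--Sampson for this equivalence; your version is more self-contained.) The injectivity of $\phi $ on the horizontal distribution is also correctly justified, since $\ker \phi =\mathrm{span}\,\xi \subset \ker F_{\ast }$ here.

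The gap is in your last sentence. The vector $\sum_{i}\phi \mathcal{T}_{V_{i}}V_{i}$ is \emph{not} ``precisely'' the quantity $\mathrm{Trace}\,\phi \mathcal{T}_{V}$: for each fixed $V$, $\mathrm{Trace}\,\phi \mathcal{T}_{V}$ is the trace of the endomorphism $W\mapsto \phi \mathcal{T}_{V}W$, i.e.\ the scalar $\sum_{i}g_{M}(\phi \mathcal{T}_{V}V_{i},V_{i})$, whereas $\sum_{i}\phi \mathcal{T}_{V_{i}}V_{i}$ is a single vector independent of $V$. The two conditions are indeed equivalent, but proving this is the actual content of the paper's argument and it is not formal bookkeeping: one computes
\begin{equation*}
\mathrm{Trace}\,\phi \mathcal{T}_{V}=\sum_{i}g_{M}(\phi \mathcal{T}_{V}V_{i},V_{i})=-\sum_{i}g_{M}(\mathcal{T}_{V_{i}}V,\phi V_{i})=\sum_{i}g_{M}(V,\mathcal{T}_{V_{i}}\phi V_{i})=g_{M}\Bigl(V,\phi \sum_{i}\mathcal{T}_{V_{i}}V_{i}\Bigr),
\end{equation*}
using the symmetry (\ref{TUW}), the skew-symmetry (\ref{4b}), and, crucially, the identity $\mathcal{T}_{U}\phi W=\phi \mathcal{T}_{U}W$ (equation (\ref{S15})), which must itself be derived from the cosymplectic condition (\ref{nambla}) together with (\ref{1}) and (\ref{2}). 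Only after this chain does one conclude that $\mathrm{Trace}\,\phi \mathcal{T}_{V}=0$ for all vertical $V$ iff $\phi \sum_{i}\mathcal{T}_{V_{i}}V_{i}=0$ (the latter being vertical), and hence iff the fibres are minimal. You never state or prove (\ref{S15}), and without it the passage from your minimality condition to the trace condition in the theorem does not go through; this is the step you dismissed as ``immediate.''
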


\begin{proof}
From \cite{EJ} we know that $F$ is harmonic if and only if $F$ has minimal
fibres. Thus $F$ is harmonic if and only if $\sum\limits_{i=1}^{k}\mathcal{T}%
_{e_{i}}e_{i}=0,$ where $k$ is dimension of $\ker F_{\ast }$. On the other
hand, from (\ref{1}), (\ref{2}) and (\ref{nambla}), we get%
\begin{equation}
\mathcal{T}_{V}\phi W=\phi \mathcal{T}_{V}W  \label{S15}
\end{equation}%
for any $W,$ $V\in \Gamma (\ker F_{\ast }).$ Using (\ref{S15}), we get 
\begin{equation*}
\sum\limits_{i=1}^{k}g_{M}(\mathcal{T}_{e_{i}}\phi
e_{i},V)=-\sum\limits_{i=1}^{k}g_{M}(\mathcal{T}_{e_{i}}e_{i},\phi V)
\end{equation*}%
for any $V\in \Gamma (\ker F_{\ast }).$ (\ref{4b}) implies that%
\begin{equation*}
\sum\limits_{i=1}^{k}g_{M}(\phi e_{i},\mathcal{T}_{e_{i}}V)=\sum%
\limits_{i=1}^{k}g_{M}(\mathcal{T}_{e_{i}}e_{i},\phi V).
\end{equation*}%
Then, using (\ref{TUW}) we have%
\begin{equation*}
\sum\limits_{i=1}^{k}g_{M}(\phi e_{i},\mathcal{T}_{Ve_{i}})=\sum%
\limits_{i=1}^{k}g_{M}(\mathcal{T}_{e_{i}}e_{i},\phi V).
\end{equation*}%
Hence, proof comes from (\ref{metric}).
\end{proof}

Using \cite{PON}, Theorem 4 and Theorem 5 we will give our first
decomposition theorem for an anti invariant Riemannian submersion.

\begin{theorem}
Let $F$ be an anti-invariant Riemannian submersion from a cosymplectic
manifold $M(\phi ,\xi ,\eta ,g_{M})$ to a Riemannian manifold $(N,g_{N}).$
Then $M$ is a locally product manifold if and only if%
\begin{equation*}
g_{N}((\nabla F_{\ast })(X,\phi Y),F_{\ast }\phi V)=-g_{M}(CY,\phi \mathcal{A%
}_{X}V)
\end{equation*}%
and%
\begin{equation*}
g_{N}((\nabla F_{\ast })(V,\phi X),F_{\ast }\phi W)=0
\end{equation*}%
for $W,$ $V\in \Gamma (\ker F_{\ast }),$ $X,$ $Y\in \Gamma ((\ker F_{\ast
})^{\bot }).$
\end{theorem}

From Corollary 2 and Corollary 3 we obtain following decomposition theorem.

\begin{theorem}
Let $F:M(\phi ,\xi ,\eta ,g_{M})\rightarrow $ $(N,g_{N})$ be an
anti-invariant Riemannian submersion such that $\phi (\ker F_{\ast })=(\ker
F_{\ast })^{\bot },$ where $M(\phi ,\xi ,\eta ,g_{M})$ is a cosymplectic
manifold and $(N,g_{N})$ is a Riemannian manifold. Then $M$ is a locally
product manifold if and only if \ $\mathcal{A}_{X}\phi Y=0$ and $\mathcal{T}%
_{V}\phi W=0$ for $W,$ $V\in \Gamma (\ker F_{\ast }),$ $X,$ $Y\in \Gamma
((\ker F_{\ast })^{\bot }).$
\end{theorem}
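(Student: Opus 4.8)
The plan is to recognize that this is the final decomposition theorem, and it should follow directly from combining the two corollaries just established: Corollary 2 (characterizing when $(\ker F_{\ast})^{\bot}$ defines a totally geodesic foliation in the special case $\phi(\ker F_{\ast}) = (\ker F_{\ast})^{\bot}$) and Corollary 3 (characterizing when $\ker F_{\ast}$ defines a totally geodesic foliation in the same special case), together with the product-manifold criterion of \cite{PON} quoted at the end of Section 2. The overall structure is: a Riemannian manifold $M$ carrying two complementary, orthogonal, mutually perpendicular foliations is locally a product if and only if \emph{both} foliations are totally geodesic.

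First I would observe that for a Riemannian submersion the total space $M$ always carries the two complementary orthogonal distributions $\ker F_{\ast}$ and $(\ker F_{\ast})^{\bot}$, and that $\ker F_{\ast}$ is always integrable (being the vertical distribution tangent to the fibers). Thus the hypotheses of the \cite{PON} criterion are available once we also know $(\ker F_{\ast})^{\bot}$ is integrable, and the criterion reduces ``$M$ is locally a product'' to the conjunction of the two conditions ``$\ker F_{\ast}$ is a totally geodesic foliation'' and ``$(\ker F_{\ast})^{\bot}$ is a totally geodesic foliation.''

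Next I would invoke the two corollaries under the standing assumption $\phi(\ker F_{\ast}) = (\ker F_{\ast})^{\bot}$. By Corollary 2, the distribution $(\ker F_{\ast})^{\bot}$ defines a totally geodesic foliation if and only if $\mathcal{A}_{X}\phi Y = 0$ for all $X,Y \in \Gamma((\ker F_{\ast})^{\bot})$. By Corollary 3, the distribution $\ker F_{\ast}$ defines a totally geodesic foliation if and only if $\mathcal{T}_{V}\phi W = 0$ for all $V,W \in \Gamma(\ker F_{\ast})$. Substituting these two equivalences into the \cite{PON} criterion immediately yields the stated characterization: $M$ is a locally product manifold if and only if $\mathcal{A}_{X}\phi Y = 0$ and $\mathcal{T}_{V}\phi W = 0$.

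Since this is a purely combinatorial assembly of results already proved, there is essentially no computational obstacle. The only point requiring a little care is confirming that the \cite{PON} product criterion genuinely applies here, namely that both distributions are integrable (the vertical one automatically, and the horizontal one precisely under the totally geodesic hypothesis, since a totally geodesic foliation is in particular integrable) and that they intersect perpendicularly everywhere, which holds by orthogonality of $\ker F_{\ast}$ and $(\ker F_{\ast})^{\bot}$. Thus the hardest part is merely bookkeeping the hypotheses of \cite{PON}; the substance of the theorem is entirely carried by Corollaries 2 and 3.
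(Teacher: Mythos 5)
Your proposal is correct and follows essentially the same route as the paper, which obtains this theorem precisely by combining Corollary 2 and Corollary 3 with the product criterion of \cite{PON}. Your additional remark verifying the integrability hypotheses of that criterion is sound and merely makes explicit what the paper leaves implicit.
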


\subsection{\textbf{Anti-invariant submersions admitting horizontal
structure vector field }}

In this section, we will study anti-invariant submersions from a
cosymplectic manifold onto a Riemannian manifold such that the
characteristic vector field $\xi $ is horizontal. Using (\ref{A1}), we have $%
\mu =\phi \mu \oplus \{\xi \}.$ For any horizontal vector field $X$ we put 
\begin{equation}
\phi X=BX+CX,  \label{IREM}
\end{equation}%
where where $BX\in \Gamma (\ker F_{\ast })$ and $CX\in \Gamma (\mu ).$

Now we suppose that $V$ is vertical and $X$ is horizontal vector field.
Using above relation and (\ref{metric}) we obtain%
\begin{equation*}
g_{M}(\phi V,CX)=0.
\end{equation*}%
From this last relation we have $g_{N}(F_{\ast }\phi V,F_{\ast }CX)=0$ which
implies that 
\begin{equation}
TN=F_{\ast }(\phi (\ker F_{\ast }))\oplus F_{\ast }(\mu ).  \label{Ba}
\end{equation}

\begin{theorem}
Let $M(\phi ,\xi ,\eta ,g_{M})$ be a cosymplectic manifold \ of dimension $%
2m+1$ and $(N,g_{N})$ is a Riemannian manifold of dimension $n.$ Let $%
F:M(\phi ,\xi ,\eta ,g_{M})\rightarrow $ $(N,g_{N})$ be an anti-invariant
Riemannian submersion such that $(\ker F_{\ast })^{\bot }=\phi \ker F_{\ast
}\oplus \{\xi \}.$Then $m+1=n$.
\end{theorem}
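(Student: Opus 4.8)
The plan is to mimic the dimension count used in the proof of Theorem 1, adapting it to the situation where $\xi$ is horizontal rather than vertical. Set $k=\dim(\ker F_{\ast})$. Since $F$ is a Riemannian submersion onto the $n$-dimensional manifold $N$, the horizontal distribution $(\ker F_{\ast})^{\bot}$ is mapped isomorphically onto $TN$, so $\dim(\ker F_{\ast})^{\bot}=n$ and consequently $k=(2m+1)-n$. The goal is therefore to extract a second independent relation between $k$ and $n$ from the hypothesis $(\ker F_{\ast})^{\bot}=\phi\ker F_{\ast}\oplus\{\xi\}$, after which $m+1=n$ will drop out by elimination.

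First I would choose an orthonormal frame $U_{1},\dots,U_{k}$ of $\Gamma(\ker F_{\ast})$. The key observation is that $\phi$ is length-preserving, hence injective, on $\ker F_{\ast}$: because $\xi$ is now horizontal, every vertical vector $U$ satisfies $\eta(U)=g_{M}(U,\xi)=0$, so (\ref{metric}) gives $g_{M}(\phi U,\phi U)=g_{M}(U,U)-\eta(U)^{2}=g_{M}(U,U)$. Thus $\phi U_{1},\dots,\phi U_{k}$ is an orthonormal family spanning $\phi(\ker F_{\ast})$.

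Next I would show that adjoining $\xi$ produces an orthonormal frame of the entire horizontal space. For every vertical $U$ we have $g_{M}(\xi,\phi U)=\eta(\phi U)=0$ by $\eta\circ\phi=0$ together with $\eta(X)=g_{M}(X,\xi)$, i.e.\ (\ref{fi}) and (\ref{metric}); combined with $\eta(\xi)=1$ this shows $\{\phi U_{1},\dots,\phi U_{k},\xi\}$ is orthonormal. By hypothesis it spans $(\ker F_{\ast})^{\bot}$, so $\dim(\ker F_{\ast})^{\bot}=k+1$, that is, $n=k+1$.

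Finally I would combine the two relations: substituting $k=(2m+1)-n$ into $n=k+1$ gives $n=(2m+1)-n+1$, hence $2n=2m+2$ and $m+1=n$. The argument is essentially bookkeeping, and I expect the only substantive points — both immediate consequences of the almost contact metric identities (\ref{fi}) and (\ref{metric}) — to be the injectivity of $\phi$ on the vertical space and the orthogonality of $\xi$ to $\phi(\ker F_{\ast})$. As an alternative route one could run the same count on the base via (\ref{Ba}): the hypothesis forces the invariant complement $\mu$ to reduce to $\{\xi\}$, so that $TN=F_{\ast}(\phi(\ker F_{\ast}))\oplus F_{\ast}(\xi)$ is a sum of dimensions $k$ and $1$, again yielding $n=k+1$.
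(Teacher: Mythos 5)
Your proposal is correct and follows essentially the same dimension-count as the paper: take an orthonormal frame $U_{1},\dots,U_{k}$ of $\ker F_{\ast}$ with $k=2m+1-n$, observe that $\phi U_{1},\dots,\phi U_{k},\xi$ form an orthonormal frame of $(\ker F_{\ast})^{\bot}$, and conclude $k=n-1$, hence $m+1=n$. The only difference is that you spell out the orthonormality of $\phi U_{1},\dots,\phi U_{k},\xi$ via (\ref{fi}) and (\ref{metric}), which the paper simply asserts.
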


\begin{proof}
We assume that $U_{1},...,U_{k\text{ }}$be an orthonormal frame of $\Gamma
(\ker F_{\ast })$, where $k=2m-n+1$. Since $(\ker F_{\ast })^{\bot }=\phi
\ker F_{\ast }\oplus \{\xi \}$, $\phi U_{1},...,\phi U_{k},\xi $ form an
orthonormal frame of $\Gamma ((\ker F_{\ast })^{\bot })$. So, by help of (%
\ref{A2a}) $\ $we obtain $k=n-1$ which implies that $m+1=n$.
\end{proof}

\begin{remark}
We note that Example 6 satisfies Theorem 10.
\end{remark}

From (\ref{fi}), (\ref{Ba}) and (\ref{IREM}) we obtain following Lemma.

\begin{lemma}
Let $F$ be an anti-invariant Riemannian submersion from a cosymplectic
manifold $M(\phi ,\xi ,\eta ,g_{M})$ to a Riemannian manifold $(N,g_{N})$.
Then we have%
\begin{eqnarray*}
BCX &=&0,\text{ \ } \\
C^{2}X &=&\phi ^{2}X-\phi BX, \\
C\phi V &=&0,\text{ \ }C^{3}X+CX=0, \\
B\phi V &=&-V
\end{eqnarray*}%
for any $X\in \Gamma ((\ker F_{\ast })^{\bot })$ and $V\in \Gamma ((\ker
F_{\ast }))$.
\end{lemma}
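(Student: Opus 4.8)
The plan is to read off every one of the four identities from the single decomposition (\ref{IREM}) together with the pointwise relations (\ref{fi}) and (\ref{metric}), once the invariance of $\mu$ under $\phi$ is in hand. That invariance, $\phi\mu \subseteq \mu$, is already recorded at the start of the subsection as $\mu = \phi\mu \oplus \{\xi\}$, but I would first re-derive it from skew-symmetry to be safe. From (\ref{metric}) one checks $g_{M}(\phi X, Y) = -g_{M}(X, \phi Y)$, so for $W \in \Gamma(\mu)$ and $U \in \Gamma(\ker F_{\ast})$ one has $g_{M}(\phi W, U) = -g_{M}(W, \phi U) = 0$ (since $W \perp \phi\ker F_{\ast}$) and $g_{M}(\phi W, \phi U) = g_{M}(W,U) - \eta(W)\eta(U) = 0$ (both terms vanish because $W$ is horizontal and $\xi$ is horizontal, so $\eta(U) = g_{M}(U,\xi) = 0$). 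Hence $\phi W$ is orthogonal to $\ker F_{\ast} \oplus \phi\ker F_{\ast}$, i.e. $\phi W \in \Gamma(\mu)$.

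Given this, the identity $BCX = 0$ is immediate: since $CX \in \Gamma(\mu)$ and $\mu$ is $\phi$-invariant, $\phi CX \in \Gamma(\mu)$, so its $\ker F_{\ast}$-component $B(CX)$ vanishes and $\phi CX = C^{2}X$. Applying $\phi$ to (\ref{IREM}) then gives $\phi^{2} X = \phi BX + \phi CX = \phi BX + C^{2}X$, which rearranges to $C^{2}X = \phi^{2}X - \phi BX$. Here I would stress the contrast with the vertical case treated earlier: because $\xi$ is now horizontal, $\eta(X) = g_{M}(X,\xi)$ need not vanish for horizontal $X$, so $\phi^{2}X = -X + \eta(X)\xi$ cannot be collapsed to $-X$, which is exactly why the statement retains the term $\phi^{2}X$ rather than writing $-X - \phi BX$.

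For the two relations with a vertical argument $V$, the key observation is that $\eta(V) = g_{M}(V,\xi) = 0$ since $V$ is vertical and $\xi$ is horizontal, whence (\ref{fi}) gives $\phi^{2}V = -V$. As $\phi V \in \Gamma(\phi\ker F_{\ast}) \subseteq \Gamma((\ker F_{\ast})^{\bot})$ is horizontal, I decompose $\phi(\phi V) = B\phi V + C\phi V$ through (\ref{IREM}) and match this against $-V \in \Gamma(\ker F_{\ast})$: the $\mu$-component forces $C\phi V = 0$, and the $\ker F_{\ast}$-component gives $B\phi V = -V$.

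Finally, for $C^{3}X + CX = 0$ I would first record $\eta(CX) = 0$: indeed $\eta(\phi X) = 0$ by (\ref{fi}) and $\eta(BX) = g_{M}(BX,\xi) = 0$ because $BX$ is vertical, so $\eta(CX) = \eta(\phi X) - \eta(BX) = 0$. Then $\phi^{2}(CX) = -CX + \eta(CX)\xi = -CX$, while the $\phi$-invariance of $\mu$ yields $\phi(CX) = C^{2}X$ and $\phi(C^{2}X) = C^{3}X$ (both $CX$ and $C^{2}X$ lie in $\mu$), giving $C^{3}X = \phi^{2}(CX) = -CX$. The main obstacle is really the opening step—pinning down $\phi\mu \subseteq \mu$ and hence $BCX = 0$—together with keeping careful track of which $\eta$-terms survive now that $\xi$ is horizontal; once those are settled, everything else is formal manipulation of (\ref{IREM}) and (\ref{fi}).
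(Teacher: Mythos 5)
Your proof is correct and follows exactly the route the paper intends: the paper states that the lemma follows from (\ref{fi}), (\ref{Ba}) and (\ref{IREM}) without writing out details, and your argument is precisely the verification of that claim, via the $\phi$-invariance of $\mu$ and the orthogonal decomposition of $\phi X$. Your remarks on why $\phi^{2}X$ cannot be replaced by $-X$ when $\xi$ is horizontal, and on $\eta(CX)=0$, correctly account for the differences from the vertical-$\xi$ version of the lemma.
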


Using (\ref{nambla})\ one can easily obtain 
\begin{equation}
\nabla _{X}Y=-\phi \nabla _{X}\phi Y+\eta (\nabla _{X}Y)\xi
\label{Namblafi3}
\end{equation}%
for $X,Y\in \Gamma ((\ker F_{\ast })^{\bot }).$

\begin{lemma}
Let $F$ be an anti-invariant Riemannian submersion from a cosymplectic
manifold $M(\phi ,\xi ,\eta ,g_{M})$ to a Riemannian manifold $(N,g_{N})$.
Then we have%
\begin{equation}
\mathcal{A}_{X}\xi =0,  \label{IKE1}
\end{equation}%
\begin{equation}
\mathcal{T}_{U}\xi =0,  \label{IKE2}
\end{equation}%
\begin{equation}
g_{M}(CX,\phi U)=0,  \label{IKE3}
\end{equation}%
\begin{equation}
g_{M}(\nabla _{Y}CX,\phi U)=-g_{M}(CX,\phi \mathcal{A}_{Y}U),  \label{IKE4}
\end{equation}%
for $X,Y\in \Gamma ((\ker F_{\ast })^{\bot })$ and $U\in \Gamma (\ker
F_{\ast }).$
\end{lemma}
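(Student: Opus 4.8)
The statement to prove is Lemma 8, which asserts four identities for an anti-invariant Riemannian submersion where $\xi$ is \emph{horizontal} (so $(\ker F_\ast)^\bot = \phi\ker F_\ast \oplus \{\xi\}$). The plan is to mirror exactly the structure of the proof of the corresponding vertical-$\xi$ result (Lemma 5), adapting each step to account for the fact that $\xi$ now lives in $(\ker F_\ast)^\bot$ rather than in $\ker F_\ast$. The four identities are formally identical to those of Lemma 5, so the same machinery should carry over with only minor bookkeeping differences coming from where $\xi$ sits.

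For the first two identities \eqref{IKE1} and \eqref{IKE2}, I would argue directly from the O'Neill decomposition formulas and the cosymplectic identity $\nabla_X\xi = 0$ from \eqref{xzeta}. For \eqref{IKE1}, apply \eqref{3} (the decomposition $\nabla_X V = \mathcal{A}_X V + \mathcal{V}\nabla_X V$) with $V$ replaced appropriately, or rather use that $\nabla_X\xi = 0$ together with the definition \eqref{AT2} of $\mathcal{A}$; since $\xi$ is horizontal here, $\mathcal{A}_X\xi = \mathcal{V}\nabla_X\xi = 0$ follows immediately. For \eqref{IKE2}, use \eqref{2} (namely $\nabla_U X = \mathcal{H}\nabla_U X + \mathcal{T}_U X$) applied to $X = \xi$: since $\nabla_U\xi = 0$, the vertical part $\mathcal{T}_U\xi$ vanishes. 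These are the easy steps.

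The identity \eqref{IKE3}, $g_M(CX,\phi U) = 0$, is the analogue of \eqref{C2} and I would prove it the same way: expand $g_M(CX,\phi U) = g_M(\phi X - BX, \phi U)$ using \eqref{IREM}, then apply the compatibility relation \eqref{metric} to rewrite $g_M(\phi X,\phi U) = g_M(X,U) - \eta(X)\eta(U)$, and observe that $g_M(\phi BX, U) = 0$ because $\phi BX \in \Gamma((\ker F_\ast)^\bot)$ while $U \in \Gamma(\ker F_\ast)$. The subtle point, and the place where the horizontal-$\xi$ case genuinely differs from Lemma 5, is the term $\eta(X)\eta(U)$: here one must use that $U$ is vertical while $\xi$ is horizontal, so $\eta(U) = g_M(U,\xi) = 0$, whereas in Lemma 5 one instead needed $g_M(X,U) = 0$ together with $\eta(X) = 0$. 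Tracking which of these vanishes is the one spot where care is required. Finally, \eqref{IKE4} is the analogue of \eqref{C3}: I would differentiate, writing $g_M(\nabla_Y CX, \phi U)$ and using that $\nabla$ respects $g_M$ together with the cosymplectic condition \eqref{nambla}, then apply \eqref{3} to split $\nabla_Y U = \mathcal{A}_Y U + \mathcal{V}\nabla_Y U$; the term involving $\mathcal{V}\nabla_Y U$ drops out because $\phi(\mathcal{V}\nabla_Y U) \in \Gamma(\phi\ker F_\ast) \subseteq \Gamma((\ker F_\ast)^\bot)$ is orthogonal to $CX \in \Gamma(\mu)$ by \eqref{IKE3}. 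The main obstacle throughout is simply verifying that each projection lands in the expected distribution given the new placement of $\xi$; no single computation is hard, but the whole argument hinges on correctly using $\eta(U)=0$ for vertical $U$ and keeping $\mu = \phi\mu\oplus\{\xi\}$ straight.
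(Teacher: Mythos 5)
Your proposal is correct and follows essentially the same route as the paper's own proof: \eqref{IKE1} and \eqref{IKE2} from \eqref{xzeta} together with the O'Neill decompositions, \eqref{IKE3} by expanding $g_M(\phi X-BX,\phi U)$ via \eqref{metric} and \eqref{IREM}, and \eqref{IKE4} by differentiating \eqref{IKE3} and splitting $\nabla_Y U$ with \eqref{3}. Your justification of the vanishing of $\eta(X)\eta(U)$ via $\eta(U)=0$ is in fact the correct one for the horizontal-$\xi$ setting (the paper's text at that point still cites $\xi\in\Gamma(\ker F_\ast)$, a leftover from the vertical case), so nothing further is needed.
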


\begin{proof}
By virtue of (\ref{4}) and (\ref{xzeta}) we have (\ref{IKE1}). Using (\ref{2}%
) and (\ref{xzeta}) we obtain (\ref{IKE2}). For $X\in \Gamma ((\ker F_{\ast
})^{\bot })$ and $U\in \Gamma (\ker F_{\ast })$, by virtue of (\ref{metric})
and (\ref{IREM}) we get 
\begin{eqnarray}
g_{M}(CX,\phi U) &=&g_{M}(\phi X-BX,\phi U)  \label{6.5} \\
&=&g_{M}(X,U)-\eta (X)\eta (U)+g_{M}(\phi BX,U).  \notag
\end{eqnarray}%
Since $\phi BX\in \Gamma ((\ker F_{\ast })^{\bot })$ and $\xi \in \Gamma
(\ker F_{\ast }),$ (\ref{6.5}) implies (\ref{IKE3}). Now using (\ref{IKE3})
we get%
\begin{equation*}
g_{M}(\nabla _{Y}CX,\phi U)=-g_{M}(CX,\nabla _{Y}\phi U)
\end{equation*}%
for $X,Y\in \Gamma ((\ker F_{\ast })^{\bot })$ and $U\in \Gamma (\ker
F_{\ast })$. Then using (\ref{3}) and (\ref{nambla}) we have 
\begin{equation*}
g_{M}(\nabla _{Y}CX,\phi U)=-g_{M}(CX,\phi \mathcal{A}_{Y}U)-g_{M}(CX,\phi (%
\mathcal{V}\nabla _{Y}U)).
\end{equation*}%
Since $\phi (\mathcal{V}\nabla _{Y}U)\in \Gamma ((\ker F_{\ast })^{\bot }),$
we obtain (\ref{IKE4}).
\end{proof}

\begin{theorem}
Let $M(\phi ,\xi ,\eta ,g_{M})$ be a cosymplectic manifold\ of dimension $%
2m+1$ and $(N,g_{N})$ is a Riemannian manifold of dimension $n$. Let $%
F:M(\phi ,\xi ,\eta ,g_{M})\rightarrow $ $(N,g_{N})$ be an anti-invariant
Riemannian submersion. Then the fibers are not proper totally umbilical.
\end{theorem}

\begin{proof}
If the $(\ker F_{\ast })^{\bot }$ are proper totally umbilical, then we have 
\begin{equation}
g_{M}(\nabla _{X}Y,V)=g_{M}(A_{X}Y,V)=g_{M}(H,V)g_{M}(X,Y)  \label{TOT}
\end{equation}%
for $X,Y\in \Gamma ((\ker F_{\ast })^{\bot })$ and $V\in \Gamma (\ker
F_{\ast })$, where $H$ is the mean curvature vector field of $(\ker F_{\ast
})^{\bot }$. Putting $\xi $ instead of $Y$ in (\ref{TOT}) and then using (%
\ref{IKE1}) we get $H=0$. This shows that $(\ker F_{\ast })^{\bot }$ is
totally geodesic. This completes proof of the Theorem.
\end{proof}

We now study the integrability of the distribution $(\ker F_{\ast })^{\bot }$
and then we investigate the geometry of leaves of $\ker F_{\ast }$ and $%
(\ker F_{\ast })^{\bot }$.

\begin{theorem}
Let $F$ be an anti-invariant Riemannian submersion from a cosymplectic
manifold $M(\phi ,\xi ,\eta ,g_{M})$ to a Riemannian manifold $(N,g_{N})$.
Then the following assertions are equivalent to each other;
\end{theorem}

$\ i)$ $(\ker F_{\ast })^{\bot }$ \textit{is integrable.}

$\ ii)$ 
\begin{eqnarray*}
g_{N}((\nabla F_{\ast })(Y,BX),F_{\ast }\phi V) &=&g_{N}((\nabla F_{\ast
})(X,BX),F_{\ast }\phi V) \\
&&+g_{M}(CY,\phi \mathcal{A}_{X}V)-g_{M}(CX,\phi \mathcal{A}_{Y}V).
\end{eqnarray*}

$iii)$%
\begin{equation*}
g_{M}(\mathcal{A}_{X}BY-\mathcal{A}_{Y}BX,\phi V)=g_{M}(CY,\phi \mathcal{A}%
_{X}V)-g_{M}(CX,\phi \mathcal{A}_{Y}V)
\end{equation*}%
\textit{for }$X,Y\in \Gamma ((\ker F_{\ast })^{\bot })$\textit{\ and }$V\in
\Gamma (\ker F_{\ast }).$

\begin{proof}
Using (\ref{Namblafi3}), for $X,Y\in \Gamma ((\ker F_{\ast })^{\bot })$ and $%
V\in \Gamma (\ker F_{\ast })$ we get%
\begin{eqnarray*}
g_{M}(\left[ X,Y\right] ,V) &=&g_{M}(\nabla _{X}Y,V)-g_{M}(\nabla _{Y}X,V) \\
&=&g_{M}(\nabla _{X}\phi Y,\phi V)-g_{M}(\nabla _{Y}\phi X,\phi V).
\end{eqnarray*}%
Then from (\ref{IREM}) we have%
\begin{eqnarray*}
g_{M}(\left[ X,Y\right] ,V) &=&g_{M}(\nabla _{X}BY,\phi V)+g_{M}(\nabla
_{X}CY,\phi V)-g_{M}(\nabla _{Y}BX,\phi V) \\
&&-g_{M}(\nabla _{Y}CX,\phi V).
\end{eqnarray*}%
Using (\ref{AT2}), (\ref{3}) and (\ref{IKE4}) and if we take into account
that $F$ is a Riemannian submersion, we obtain%
\begin{eqnarray*}
g_{M}(\left[ X,Y\right] ,V) &=&g_{N}(F_{\ast }\nabla _{X}BY,F_{\ast }\phi
V)-g_{M}(CY,\phi \mathcal{A}_{X}V) \\
&&-g_{N}(F_{\ast }\nabla _{Y}BX,F_{\ast }\phi V)+g_{M}(CX,\phi \mathcal{A}%
_{Y}V).
\end{eqnarray*}%
Thus, from (\ref{5}) we have 
\begin{eqnarray*}
g_{M}(\left[ X,Y\right] ,V) &=&g_{N}(-(\nabla F_{\ast })(X,BY)+(\nabla
F_{\ast })(Y,BX),F_{\ast }\phi V) \\
&&+g_{M}(CX,\phi \mathcal{A}_{Y}V)-g_{M}(CY,\phi \mathcal{A}_{X}V)
\end{eqnarray*}%
which proves $(i)\Leftrightarrow (ii).$On the other hand using (\ref{5}) we
get%
\begin{equation*}
(\nabla F_{\ast })(Y,BX)-(\nabla F_{\ast })(X,BY)=-F_{\ast }(\nabla
_{Y}BX-\nabla _{X}BY).
\end{equation*}%
Then (\ref{3}) implies that 
\begin{equation*}
(\nabla F_{\ast })(Y,BX)-(\nabla F_{\ast })(X,BY)=-F_{\ast }(\mathcal{A}%
_{Y}BX-\mathcal{A}_{X}BY).
\end{equation*}

From (\ref{AT2}) $\mathcal{A}_{Y}BX-\mathcal{A}_{X}BY\in \Gamma ((\ker
F_{\ast })^{\bot }),$ this shows that $(ii)\Leftrightarrow (iii).$
\end{proof}

\begin{remark}
We assume that $(\ker F_{\ast })^{\bot }=\phi \ker F_{\ast }\oplus \{\xi \}.$
Using (\ref{IREM}) one can prove that $CX=0$ \textit{for }$X\in \Gamma
((\ker F_{\ast })^{\bot })$.
\end{remark}

\begin{corollary}
Let $M(\phi ,\xi ,\eta ,g_{M})$ be a cosymplectic manifold \ of dimension $%
2m+1$ and $(N,g_{N})$ is a Riemannian manifold of dimension $n.$ Let $%
F:M(\phi ,\xi ,\eta ,g_{M})\rightarrow $ $(N,g_{N})$ be an anti-invariant
Riemannian submersion such that $(\ker F_{\ast })^{\bot }=\phi \ker F_{\ast
}\oplus \{\xi \}.$ Then the following assertions are equivalent to each
other;

$i)$ $(\ker F_{\ast })^{\bot }$ \textit{is integrable.}

$ii)(\nabla F_{\ast })(X,\phi Y)=(\nabla F_{\ast })(\phi X,Y),$ for $X\in
\Gamma ((\ker F_{\ast })^{\bot })$ and $X,Y\in \Gamma ((\ker F_{\ast
})^{\bot }).$

$iii)$ $\mathcal{A}_{X}\phi Y=\mathcal{A}_{Y}\phi X.$
\end{corollary}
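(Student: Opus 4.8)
The plan is to obtain the corollary by specialising the preceding integrability Theorem to the case in which the endomorphism $C$ vanishes, and then sharpening the resulting ``tested'' identities into genuine vector identities. By the preceding Remark, the hypothesis $(\ker F_\ast)^\bot=\phi\ker F_\ast\oplus\{\xi\}$ forces $CX=0$, so (\ref{IREM}) gives $\phi X=BX$ for every $X\in\Gamma((\ker F_\ast)^\bot)$; in particular $\phi Y$ is vertical whenever $Y$ is horizontal. Substituting $BX=\phi X$, $BY=\phi Y$ and $CX=CY=0$ into parts $(ii)$ and $(iii)$ of the preceding Theorem, integrability of $(\ker F_\ast)^\bot$ becomes equivalent to each of
\[ g_N((\nabla F_\ast)(Y,\phi X),F_\ast\phi V)=g_N((\nabla F_\ast)(X,\phi Y),F_\ast\phi V) \]
and
\[ g_M(\mathcal{A}_X\phi Y-\mathcal{A}_Y\phi X,\phi V)=0, \]
for all $X,Y\in\Gamma((\ker F_\ast)^\bot)$ and $V\in\Gamma(\ker F_\ast)$. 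What remains is to promote these to the pointwise identities $(ii)$ and $(iii)$.

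For $(i)\Leftrightarrow(iii)$, the point is that $\mathcal{A}_X\phi Y-\mathcal{A}_Y\phi X$ is horizontal (as $\mathcal{A}$ reverses the two distributions and $\phi Y$ is vertical), hence lies in $(\ker F_\ast)^\bot=\phi\ker F_\ast\oplus\{\xi\}$, and that its $\xi$-component vanishes automatically. Indeed, since $\phi Y$ is vertical, the skew-symmetry (\ref{4c}) together with $\mathcal{A}_X\xi=0$ from (\ref{IKE1}) yields $g_M(\mathcal{A}_X\phi Y,\xi)=-g_M(\mathcal{A}_X\xi,\phi Y)=0$, and likewise $g_M(\mathcal{A}_Y\phi X,\xi)=0$. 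Thus a horizontal vector of this form is determined by its inner products against the $\phi V$, $V\in\Gamma(\ker F_\ast)$, which span $\phi\ker F_\ast$; consequently the second displayed condition holding for all $V$ is equivalent to $\mathcal{A}_X\phi Y=\mathcal{A}_Y\phi X$.

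For $(i)\Leftrightarrow(ii)$, I would push the first displayed identity through $F_\ast$. Because $\phi Y$ is vertical, $F_\ast\phi Y=0$, so (\ref{5}) and (\ref{3}) give $(\nabla F_\ast)(X,\phi Y)=-F_\ast(\mathcal{A}_X\phi Y)$ and hence
\[ (\nabla F_\ast)(X,\phi Y)-(\nabla F_\ast)(Y,\phi X)=-F_\ast(\mathcal{A}_X\phi Y-\mathcal{A}_Y\phi X). \]
Since $F_\ast$ restricts to a linear isomorphism of $(\ker F_\ast)^\bot$ onto $TN$, this difference vanishes precisely when $(iii)$ holds, which by the previous paragraph is equivalent to integrability. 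Rewriting $(\nabla F_\ast)(Y,\phi X)=(\nabla F_\ast)(\phi X,Y)$ by symmetry of the second fundamental form then puts the identity in the form displayed in $(ii)$.

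The only real obstacle is this last promotion step. In the horizontal-$\xi$ setting the horizontal distribution contains the extra line $\{\xi\}$ beyond $\phi\ker F_\ast$, so testing a horizontal vector against the $\phi V$ alone is a priori weaker than asserting it vanishes; what saves the argument is the automatic vanishing of the $\xi$-component recorded above via (\ref{IKE1}) and (\ref{4c}). Once that is in place, both equivalences follow at once from the preceding Theorem.
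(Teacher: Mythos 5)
Your proposal is correct and follows essentially the route the paper intends: the paper gives no separate proof of this corollary, treating it as the specialization of the preceding integrability theorem via the Remark that $(\ker F_{\ast })^{\bot }=\phi \ker F_{\ast }\oplus \{\xi \}$ forces $C=0$. Your additional step --- promoting the identities tested against $F_{\ast }\phi V$ and $\phi V$ to pointwise ones by checking, via (\ref{4c}) and $\mathcal{A}_{X}\xi =0$, that the $\xi $-component of $\mathcal{A}_{X}\phi Y-\mathcal{A}_{Y}\phi X$ vanishes --- is exactly the detail the paper leaves implicit, and you handle it correctly.
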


\begin{theorem}
Let $M(\phi ,\xi ,\eta ,g_{M})$ be a cosymplectic\ of dimension $2m+1$ and $%
(N,g_{N})$ is a Riemannian manifold of dimension $n.$ Let $F:M(\phi ,\xi
,\eta ,g_{M})\rightarrow $ $(N,g_{N})$ be an anti-invariant Riemannian
submersion. Then the following assertions are equivalent to each other;
\end{theorem}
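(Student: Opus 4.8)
The plan is to prove this theorem as the horizontal-structure-vector-field analogue of Theorem 4: that is, to establish the equivalence of the statements that $(\ker F_{\ast})^{\bot}$ defines a totally geodesic foliation on $M$, that $g_{M}(\mathcal{A}_{X}BY,\phi V)=g_{M}(CY,\phi\mathcal{A}_{X}V)$, and that $g_{N}((\nabla F_{\ast})(X,\phi Y),F_{\ast}\phi V)=-g_{M}(CY,\phi\mathcal{A}_{X}V)$, for $X,Y\in\Gamma((\ker F_{\ast})^{\bot})$ and $V\in\Gamma(\ker F_{\ast})$. Since $(\ker F_{\ast})^{\bot}$ is totally geodesic precisely when $g_{M}(\nabla_{X}Y,V)=0$ for all such $X,Y,V$, the entire argument reduces to rewriting this single quantity in two convenient forms.

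First I would use the compatibility relation (\ref{metric}) together with the parallelism (\ref{nambla}) to write $g_{M}(\nabla_{X}Y,V)=g_{M}(\nabla_{X}\phi Y,\phi V)$. The crucial point, and the place where the horizontal case genuinely diverges from Theorem 4, is that the correction term in (\ref{metric}) is $\eta(\nabla_{X}Y)\eta(V)$, and this vanishes because $V$ is vertical while here $\xi$ is horizontal, so $\eta(V)=g_{M}(V,\xi)=0$; equivalently, the extra $\eta(\nabla_{X}Y)\xi$ summand appearing in (\ref{Namblafi3}) is annihilated upon pairing against $\phi V\in\phi(\ker F_{\ast})$. I would then decompose $\phi Y=BY+CY$ by (\ref{IREM}), apply (\ref{3}) to $\nabla_{X}BY$ and discard its vertical part against the horizontal field $\phi V$ to produce $g_{M}(\mathcal{A}_{X}BY,\phi V)$, and apply (\ref{IKE4}) to $\nabla_{X}CY$ to produce $-g_{M}(CY,\phi\mathcal{A}_{X}V)$. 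This gives $g_{M}(\nabla_{X}Y,V)=g_{M}(\mathcal{A}_{X}BY,\phi V)-g_{M}(CY,\phi\mathcal{A}_{X}V)$, which is exactly $(i)\Leftrightarrow(ii)$.

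For $(ii)\Leftrightarrow(iii)$ I would run the computation in the reverse direction: starting from $g_{M}(CY,\phi\mathcal{A}_{X}V)$, use (\ref{IKE4}) to rewrite it as $-g_{M}(\nabla_{X}CY,\phi V)$, substitute $CY=\phi Y-BY$ from (\ref{IREM}), and invoke axiom $S2$ to transfer $g_{M}(\nabla_{X}BY,\phi V)$ into $g_{N}(F_{\ast}\nabla_{X}BY,F_{\ast}\phi V)$, the vertical part of $\nabla_{X}BY$ contributing nothing since $\phi V$ is horizontal. Feeding (\ref{nambla}), (\ref{metric}), the definition (\ref{5}) of the second fundamental form, and the submersion identity (\ref{5a}) into the resulting expression collapses it to $g_{N}(-(\nabla F_{\ast})(X,\phi Y),F_{\ast}\phi V)$, yielding assertion $(iii)$.

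I expect the only real obstacle to be the bookkeeping of the vertical/horizontal splitting in the presence of a horizontal $\xi$: one must check consistently that every term landing in $\ker F_{\ast}$ dies when paired with $\phi V$, and that the $\xi$-contributions peculiar to this case (absent in Theorem 4) never survive. Once the Lemma supplying (\ref{IKE1})--(\ref{IKE4}) and the identity (\ref{Namblafi3}) are in hand, each individual step is a routine substitution, so the proof is in essence a careful transcription of the Theorem 4 argument with (\ref{A2}), (\ref{C2}), (\ref{C3}) and (\ref{Namblafi2}) replaced throughout by (\ref{IREM}), (\ref{IKE3}), (\ref{IKE4}) and (\ref{Namblafi3}).
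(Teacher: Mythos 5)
Your proposal is correct and follows essentially the same route as the paper: both arguments reduce $(i)\Leftrightarrow(ii)$ to rewriting $g_{M}(\nabla_{X}Y,V)$ via (\ref{metric}), (\ref{nambla}), the decomposition (\ref{IREM}), equation (\ref{3}) and the identity (\ref{IKE4}), and both obtain $(ii)\Leftrightarrow(iii)$ by running (\ref{IKE4}) backwards and invoking axiom $S2$ together with (\ref{5}) and (\ref{5a}). Your explicit remark that the $\eta$-correction term dies because $\eta(V)=0$ for vertical $V$ is a welcome clarification of a point the paper leaves implicit (it even cites the vertical-case equations (\ref{C3}) and (\ref{A2}) where (\ref{IKE4}) and (\ref{IREM}) are meant), but the substance is identical.
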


$\ i)$ $(\ker F_{\ast })^{\bot }$ \textit{defines a totally geodesic
foliation on }$M.$

$\ ii)$ 
\begin{equation*}
g_{M}(\mathcal{A}_{X}BY,\phi V)=g_{M}(CY,\phi \mathcal{A}_{X}V).
\end{equation*}

$iii)$%
\begin{equation*}
g_{N}((\nabla F_{\ast })(X,\phi Y),F_{\ast }\phi V)=-g_{M}(CY,\phi \mathcal{A%
}_{X}V).
\end{equation*}%
\textit{for }$X,Y\in \Gamma ((\ker F_{\ast })^{\bot })$\textit{\ and }$V\in
\Gamma (\ker F_{\ast })$.

\begin{proof}
From (\ref{metric}) and (\ref{nambla}) we obtain%
\begin{equation*}
g_{M}(\nabla _{X}Y,V)=g_{M}(\nabla _{X}\phi Y,\phi V)
\end{equation*}%
for\textit{\ }$X,Y\in \Gamma ((\ker F_{\ast })^{\bot })$\textit{\ }and%
\textit{\ }$V\in \Gamma (\ker F_{\ast }).$Using (\ref{IREM})%
\begin{equation*}
g_{M}(\nabla _{X}Y,V)=g_{M}(\mathcal{\nabla }_{X}BY+\mathcal{\nabla }%
_{X}CY,\phi V)
\end{equation*}%
From (\ref{3}) and (\ref{C3}) 
\begin{equation*}
g_{M}(\nabla _{X}Y,V)=g_{M}(\mathcal{A}_{X}BY+\mathcal{V}\nabla _{X}BY,\phi
V)-g_{M}(CY,\phi \mathcal{A}_{X}V).
\end{equation*}

The last equation shows $(i)\Leftrightarrow (ii)$.

For $X,Y\in \Gamma ((\ker F_{\ast })^{\bot })$\textit{\ }and\textit{\ }$V\in
\Gamma (\ker F_{\ast }),$%
\begin{eqnarray}
g_{M}(\mathcal{A}_{X}BY,\phi V) &=&g_{M}(CY,\phi \mathcal{A}_{X}V)  \notag \\
&&\overset{(\ref{C3})}{=}-g_{M}(\nabla _{X}CY,\phi V)  \notag \\
&&\overset{(\ref{A2})}{=}-g_{M}(\nabla _{X}\phi Y,\phi V)+g_{M}(\nabla
_{X}BY,\phi V)  \label{K6}
\end{eqnarray}%
Since differential $F_{\ast }$ preserves the lenghts of horizontal vectors
the relation (\ref{K6}) forms%
\begin{equation}
g_{M}(\mathcal{A}_{X}BY,\phi V)=g_{N}(F_{\ast }\nabla _{X}BY,F_{\ast }\phi
V)-g_{M}(\nabla _{X}\phi Y,\phi V)  \label{K7}
\end{equation}%
Using, (\ref{nambla}), (\ref{metric}), (\ref{5}) and (\ref{5a}) in (\ref{K7}%
) respectively, we obtain%
\begin{equation*}
g_{M}(\mathcal{A}_{X}BY,\phi V)=g_{N}(-(\nabla F_{\ast })(X,\phi Y),F_{\ast
}\phi V)
\end{equation*}%
which tells that $(ii)\Leftrightarrow (iii)$.
\end{proof}

\begin{corollary}
Let $F:M(\phi ,\xi ,\eta ,g_{M})\rightarrow $ $(N,g_{N})$ be an
anti-invariant Riemannian submersion such that $(\ker F_{\ast })^{\bot
}=\phi \ker F_{\ast }\oplus \{\xi \}$, where $M(\phi ,\xi ,\eta ,g_{M})$ is
a cosymplectic manifold and $(N,g_{N})$ is a Riemannian manifold. Then the
following assertions are equivalent to each other;

$\ i)$ $(\ker F_{\ast })^{\bot }$ \textit{defines a totally geodesic
foliation on }$M.$

$ii)$ $\mathcal{A}_{X}\phi Y=0.$

$iii)$ $(\nabla F_{\ast })(X,\phi Y)=0$ \textit{for }$X,Y\in \Gamma ((\ker
F_{\ast })^{\bot })$\textit{\ and }$V\in \Gamma (\ker F_{\ast })$.
\end{corollary}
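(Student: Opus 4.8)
The plan is to read this off from the preceding Theorem, using the Remark immediately above it to kill all the terms involving $C$. Under the standing hypothesis $(\ker F_{\ast })^{\bot }=\phi \ker F_{\ast }\oplus \{\xi \}$ that Remark gives $CX=0$ for every $X\in \Gamma ((\ker F_{\ast })^{\bot })$, so (\ref{IREM}) degenerates to $\phi X=BX\in \Gamma (\ker F_{\ast })$; in particular $\phi Y$ is \emph{vertical} whenever $Y$ is horizontal. Feeding $C=0$ and $BY=\phi Y$ into the analytic conditions of the preceding Theorem, its item (ii) collapses to $g_{M}(\mathcal{A}_{X}\phi Y,\phi V)=0$ and its item (iii) collapses to $g_{N}((\nabla F_{\ast })(X,\phi Y),F_{\ast }\phi V)=0$, each for all $X,Y\in \Gamma ((\ker F_{\ast })^{\bot })$ and $V\in \Gamma (\ker F_{\ast })$. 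So everything reduces to promoting these ``$\phi V$-tested'' identities to the genuine vanishing statements $\mathcal{A}_{X}\phi Y=0$ and $(\nabla F_{\ast })(X,\phi Y)=0$ asserted in (ii) and (iii).

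For $(i)\Leftrightarrow (ii)$ I would note that $\mathcal{A}_{X}\phi Y$ is horizontal, and that the horizontal distribution splits as $\phi \ker F_{\ast }\oplus \{\xi \}$. Since the vectors $\phi V$ (for $V\in \Gamma (\ker F_{\ast })$) span $\phi \ker F_{\ast }$, the collapsed item (ii) of the preceding Theorem says precisely that the $\phi \ker F_{\ast }$-component of $\mathcal{A}_{X}\phi Y$ is zero. It remains to handle the $\xi $-direction, and this is the one place where the present (horizontal $\xi $) setting genuinely differs from the vertical case of Corollary 2. The key computation is
\begin{equation*}
g_{M}(\mathcal{A}_{X}\phi Y,\xi )=-g_{M}(\mathcal{A}_{X}\xi ,\phi Y)=0,
\end{equation*}
obtained from the skew-symmetry (\ref{4c}) of $\mathcal{A}_{X}$ together with $\mathcal{A}_{X}\xi =0$ from (\ref{IKE1}). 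Hence $\mathcal{A}_{X}\phi Y$ has vanishing components in both summands and is therefore zero, establishing $(i)\Leftrightarrow (ii)$.

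For $(ii)\Leftrightarrow (iii)$ I would argue directly rather than through the tested form. As $\phi Y$ is vertical we have $F_{\ast }\phi Y=0$, so the definition (\ref{5}) of the second fundamental form gives $(\nabla F_{\ast })(X,\phi Y)=-F_{\ast }(\nabla _{X}\phi Y)$; decomposing $\nabla _{X}\phi Y$ by (\ref{3}) and dropping the vertical part annihilated by $F_{\ast }$ yields
\begin{equation*}
(\nabla F_{\ast })(X,\phi Y)=-F_{\ast }(\mathcal{A}_{X}\phi Y).
\end{equation*}
Because $F_{\ast }$ restricts to a linear isometry on the horizontal distribution and $\mathcal{A}_{X}\phi Y$ is horizontal, the right-hand side vanishes if and only if $\mathcal{A}_{X}\phi Y=0$; this gives $(ii)\Leftrightarrow (iii)$ and closes the chain of equivalences.

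I expect the only real subtlety to be the $\xi $-component issue flagged above: because the horizontal distribution now carries the extra line $\{\xi \}$, pairing against the $\phi V$ alone controls $\mathcal{A}_{X}\phi Y$ only modulo $\xi $, and one must separately check that this $\xi $-component is automatically zero. The skew-symmetry identity of the second paragraph disposes of exactly this point, after which the isometry property of $F_{\ast }$ makes the translation into condition (iii) immediate.
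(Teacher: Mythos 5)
Your proposal is correct and follows the route the paper intends: Corollary 5 is stated without proof as the specialization of Theorem 13 via the Remark that $C=0$ when $(\ker F_{\ast })^{\bot }=\phi \ker F_{\ast }\oplus \{\xi \}$, which is exactly what you carry out. Your extra step --- checking the $\xi $-component via $g_{M}(\mathcal{A}_{X}\phi Y,\xi )=-g_{M}(\mathcal{A}_{X}\xi ,\phi Y)=0$ from (\ref{4c}) and (\ref{IKE1}), and likewise bypassing the $F_{\ast }\xi $-direction in (iii) by writing $(\nabla F_{\ast })(X,\phi Y)=-F_{\ast }(\mathcal{A}_{X}\phi Y)$ --- is genuinely needed here (unlike in the vertical-$\xi $ Corollary 2, where the $\phi V$ span all of $(\ker F_{\ast })^{\bot }$) and correctly closes a detail the paper leaves implicit.
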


For the distribution $\ker F_{\ast }$, we have;

\begin{theorem}
Let $F$ be an anti-invariant Riemannian submersion from a cosymplectic
manifold $M(\phi ,\xi ,\eta ,g_{M})$ to a Riemannian manifold $(N,g_{N})$.
Then the following assertions are equivalent to each other;

$i)$ $(\ker F_{\ast })$ defines a totally geodesic foliation on $M$ .

$ii)$ $g_{N}((\nabla F_{\ast })(V,\phi X),F_{\ast }\phi W)=0$ \ \ for $X\in
\Gamma ((\ker F_{\ast })^{\bot })$ and $V,W\in \Gamma (\ker F_{\ast }).$

$iii)$ $\mathcal{T}_{V}BX+\mathcal{A}_{CX}V\in \Gamma (\mu ).$
\end{theorem}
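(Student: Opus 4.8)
The plan is to characterize when the vertical distribution $\ker F_{\ast}$ is totally geodesic by analyzing $g_M(\nabla_V W, X)$ for vertical $V,W$ and horizontal $X$, since the fibers form a totally geodesic foliation exactly when $\nabla_V W$ stays vertical, i.e. $g_M(\nabla_V W, X) = 0$ for all horizontal $X$. This is the horizontal-$\xi$ analogue of Theorem 6, so I would mirror that proof closely. First I would use $g_M(W,X)=0$ to write $g_M(\nabla_V W, X) = -g_M(W, \nabla_V X)$, then invoke the cosymplectic identity $(\ref{nambla})$ together with the compatibility $(\ref{metric})$ to transfer $\phi$ across: writing $g_M(W, \nabla_V X) = g_M(\phi W, \phi \nabla_V X) = g_M(\phi W, \nabla_V \phi X)$, using that $\phi W$ is horizontal so the $\eta$-term in $(\ref{metric})$ drops out. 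This should yield
\begin{equation*}
g_M(\nabla_V W, X) = -g_M(\phi W, \mathcal{H}\nabla_V \phi X).
\end{equation*}

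Next I would bring in the submersion structure. Since $F$ is a Riemannian submersion and $\phi W$ is horizontal, I would rewrite the right-hand side via the second fundamental form $(\ref{5})$: because $(\nabla F_{\ast})(V,\phi X) = -F_{\ast}(\nabla_V \phi X)$ captures exactly the horizontal-image data, I expect to obtain
\begin{equation*}
g_M(\nabla_V W, X) = g_N(F_{\ast}\phi W, (\nabla F_{\ast})(V, \phi X)),
\end{equation*}
which immediately gives $(i)\Leftrightarrow(ii)$, reading off that $(\ker F_{\ast})$ is totally geodesic iff this pairing vanishes for all $W$ (equivalently for all $\phi W$, which by $(\ref{Ba})$ spans $F_{\ast}(\phi \ker F_{\ast})$, a factor of $TN$).

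For $(ii)\Leftrightarrow(iii)$ I would decompose $\phi X = BX + CX$ using $(\ref{IREM})$ and expand $\nabla_V \phi X = \nabla_V BX + \nabla_V CX$. Writing $\nabla_V CX = [V,CX] + \nabla_{CX} V$ and noting $[V,CX]\in\Gamma(\ker F_{\ast})$ (so it is $\phi W$-orthogonal after pairing, or absorbed correctly), I would apply $(\ref{1})$ to $\nabla_V BX$ and $(\ref{3})$ to $\nabla_{CX}V$ to extract the $\mathcal{T}_V BX + \mathcal{A}_{CX} V$ terms. The pairing $g_M(\phi W, \nabla_V \phi X)$ then collapses to $-g_M(\phi W, \mathcal{T}_V BX + \mathcal{A}_{CX} V)$, and since $\phi W$ ranges over $\phi(\ker F_{\ast})$, vanishing of this pairing for all such $W$ is equivalent to $\mathcal{T}_V BX + \mathcal{A}_{CX} V \in \Gamma(\mu)$, because $\mu$ is precisely the orthogonal complement of $\phi \ker F_{\ast}$ inside $(\ker F_{\ast})^{\bot}$ by $(\ref{A1})$.

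I anticipate the main obstacle to be the bookkeeping in the $(ii)\Leftrightarrow(iii)$ step: one must carefully track which components land in $\ker F_{\ast}$ versus $\mu$, and verify that the bracket term $[V,CX]$ and the purely vertical pieces genuinely drop out of the pairing against $\phi W$. The identity $\mathcal{T}_V\xi = 0$ from $(\ref{IKE2})$ and the structure $(\ref{A1})$ for the horizontal-$\xi$ case will be essential to ensure no stray $\xi$-contribution survives. Everything else is a routine application of $(\ref{metric})$, $(\ref{nambla})$, and the O'Neill formulas $(\ref{1})$–$(\ref{4})$, exactly paralleling Theorem 6.
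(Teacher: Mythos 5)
Your proposal is correct and follows the paper's own proof essentially step for step: the same reduction of $g_{M}(\nabla _{V}W,X)$ to $-g_{M}(\phi W,\mathcal{H}\nabla _{V}\phi X)$ via (\ref{metric}) and (\ref{nambla}), the same identification with $g_{N}(F_{\ast }\phi W,(\nabla F_{\ast })(V,\phi X))$ for $(i)\Leftrightarrow (ii)$, and the same decomposition $\nabla _{V}\phi X=\nabla _{V}BX+\left[ V,CX\right] +\nabla _{CX}V$ with (\ref{1}) and (\ref{3}) extracting $\mathcal{T}_{V}BX+\mathcal{A}_{CX}V$ for $(ii)\Leftrightarrow (iii)$. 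No substantive differences to report.
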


\begin{proof}
Since $g_{M}(W,X)=0$ we have $g_{M}(\nabla _{V}W,X)=-g(W,\nabla _{V}X).$
From (\ref{metric}) and (\ref{nambla}) we get $g_{M}(\nabla
_{V}W,X)=-g_{M}(\phi W,H\nabla _{V}\phi X).$ Then Riemannian submersion $F$
and (\ref{5}) imply that 
\begin{equation*}
g_{M}(\nabla _{V}W,X)=g_{N}(F_{\ast }\phi W,(\nabla F_{\ast })(V,\phi X))
\end{equation*}%
which is $(i)\Leftrightarrow (ii).$ By direct calculation, we derive%
\begin{equation*}
g_{N}((F_{\ast }\phi W,(\nabla F_{\ast })(V,\phi X))=-g_{M}(\phi W,\nabla
_{V}\phi X).
\end{equation*}%
Using (\ref{IREM}) we have 
\begin{equation*}
g_{N}((F_{\ast }\phi W,(\nabla F_{\ast })(V,\phi X))=-g_{M}(\phi W,\nabla
_{V}BX+\nabla _{V}CX).
\end{equation*}%
Hence we get%
\begin{equation*}
g_{N}((F_{\ast }\phi W,(\nabla F_{\ast })(V,\phi X))=-g_{M}(\phi W,\nabla
_{V}BX+\left[ V,CX\right] +\nabla _{CX}V).
\end{equation*}%
Since $\left[ V,CX\right] \in \Gamma (\ker F_{\ast }),$ using (\ref{1}) and (%
\ref{3}), we obtain%
\begin{equation*}
g_{N}((F_{\ast }\phi W,(\nabla F_{\ast })(V,\phi X))=-g_{M}(\phi W,\mathcal{T%
}_{V}BX+\mathcal{A}_{CX}V).
\end{equation*}%
This shows $(ii)\Leftrightarrow (iii).$
\end{proof}

\begin{corollary}
Let $F:M(\phi ,\xi ,\eta ,g_{M})\rightarrow $ $(N,g_{N})$ be an
anti-invariant Riemannian submersion such that $(\ker F_{\ast })^{\bot
}=\phi \ker F_{\ast }\oplus \{\xi \},$ where $M(\phi ,\xi ,\eta ,g_{M})$ is
a cosymplectic manifold and $(N,g_{N})$ is a Riemannian manifold. Then
following assertions are equivalent to each other;

$i)(\ker F_{\ast })$ \textit{defines a totally geodesic foliation on }$M.$

$ii)(\nabla F_{\ast })(V,\phi X)=0,$ for $X\in \Gamma ((\ker F_{\ast
})^{\bot })$ and $V,W\in \Gamma (\ker F_{\ast }).$

$iii)$ $\mathcal{T}_{V}\phi W=0.$
\end{corollary}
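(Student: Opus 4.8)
The plan is to obtain this Corollary by specializing the preceding Theorem to the hypothesis $(\ker F_{\ast })^{\bot }=\phi \ker F_{\ast }\oplus \{\xi \}$, under which the invariant complement $\mu $ reduces to $\{\xi \}$ and, by the Remark just above, $C=0$. First I would record the two consequences of $C=0$ that drive everything. For $X\in \Gamma ((\ker F_{\ast })^{\bot })$, equation (\ref{IREM}) gives $\phi X=BX\in \Gamma (\ker F_{\ast })$, so $\phi X$ is vertical; and writing $X=\phi U+a\xi $ with $U\in \Gamma (\ker F_{\ast })$ and using $\phi ^{2}U=-U+\eta (U)\xi =-U$ by (\ref{fi}) (because $\eta (U)=g_{M}(U,\xi )=0$ for vertical $U$), one sees that $\phi X=-U$ sweeps out all of $\ker F_{\ast }$ as $X$ varies. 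These are precisely the facts that turn the Theorem's conditions into the sharper ones stated here.

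For $(i)\Leftrightarrow (ii)$, I would compute the second fundamental form directly rather than route through the Theorem. Since $\phi X$ is vertical we have $F_{\ast }(\phi X)=0$, so (\ref{5}) together with the O'Neill splitting (\ref{1}) gives
\[
(\nabla F_{\ast })(V,\phi X)=-F_{\ast }(\nabla _{V}\phi X)=-F_{\ast }(\mathcal{T}_{V}\phi X),
\]
the vertical part $\hat{\nabla}_{V}\phi X$ being killed by $F_{\ast }$. As $F_{\ast }$ restricts to a linear isomorphism on the horizontal space and $\mathcal{T}_{V}\phi X$ is horizontal, condition (ii) is equivalent to $\mathcal{T}_{V}\phi X=0$ for all $V$ and all horizontal $X$; by the first paragraph this is exactly $\mathcal{T}_{V}W=0$ for all $V,W\in \Gamma (\ker F_{\ast })$, i.e. the fibres are totally geodesic, which is (i).

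For $(i)\Leftrightarrow (iii)$, I would first re-establish the analogue of (\ref{S15}) in this setting: feeding $\nabla \phi =0$ into $\nabla _{V}(\phi W)$ and separating horizontal and vertical components via (\ref{1}) and (\ref{2}) yields $\mathcal{T}_{V}\phi W=\phi \mathcal{T}_{V}W$ for $V,W\in \Gamma (\ker F_{\ast })$. Then (\ref{4b}) with $G=\xi $, combined with (\ref{IKE2}), gives $g_{M}(\mathcal{T}_{V}W,\xi )=0$, so the horizontal vector $\mathcal{T}_{V}W$ actually lies in $\phi \ker F_{\ast }$; since $\ker \phi =\mathrm{span}\{\xi \}$ meets $\phi \ker F_{\ast }$ only in $0$, the map $\phi $ is injective there. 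Hence $\mathcal{T}_{V}\phi W=\phi \mathcal{T}_{V}W=0$ forces $\mathcal{T}_{V}W=0$, while the converse is immediate, giving $(iii)\Leftrightarrow (i)$ and closing the loop.

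The step I expect to be the main obstacle is the $(ii)$ equivalence if one instead tries to inherit it from the Theorem's condition $(ii)$, which only asserts vanishing of the $F_{\ast }(\phi \ker F_{\ast })$-component of $(\nabla F_{\ast })(V,\phi X)$. Promoting this to full vanishing demands that $(\nabla F_{\ast })(V,\phi X)$ carry no component along $F_{\ast }(\mu )=\mathrm{span}\{F_{\ast }\xi \}$, which is exactly the orthogonality $\mathcal{T}_{V}W\perp \xi $ established above together with the splitting (\ref{Ba}). The direct computation in the second paragraph avoids this difficulty altogether, so that is the route I would take.
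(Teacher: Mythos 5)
Your proposal is correct. Note that the paper supplies no proof for this corollary at all: it is presented as an immediate specialization of the preceding theorem together with the remark that $C=0$ when $(\ker F_{\ast })^{\bot }=\phi \ker F_{\ast }\oplus \{\xi \}$. Your argument is therefore necessarily more detailed than the paper's, and it is sound throughout: the identity $(\nabla F_{\ast })(V,\phi X)=-F_{\ast }(\mathcal{T}_{V}\phi X)$ (valid because $\phi X$ is vertical here, so $F_{\ast }\phi X=0$ and the vertical part $\hat{\nabla}_{V}\phi X$ is killed), the observation that $\phi X$ sweeps out all of $\ker F_{\ast }$, and the injectivity of $\phi $ on $\phi \ker F_{\ast }$ combined with $g_{M}(\mathcal{T}_{V}W,\xi )=-g_{M}(\mathcal{T}_{V}\xi ,W)=0$ from (\ref{4b}) and (\ref{IKE2}) all check out. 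You also correctly identify the one point the paper glosses over: the theorem's condition $(ii)$ only asserts vanishing of the $F_{\ast }(\phi \ker F_{\ast })$-components of $(\nabla F_{\ast })(V,\phi X)$, and upgrading this to the corollary's full vanishing requires precisely the orthogonality $\mathcal{T}_{V}\phi X\perp \xi $ that you establish; your direct computation sidesteps this cleanly, whereas a literal specialization of the theorem would need that extra step made explicit. The only cosmetic remark is that your intermediate identity $\mathcal{T}_{V}\phi W=\phi \mathcal{T}_{V}W$ is exactly the paper's (\ref{S5}), proved later in the harmonicity theorem, so you could cite it rather than rederive it.
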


\begin{theorem}
Let $F:M(\phi ,\xi ,\eta ,g_{M})\rightarrow $ $(N,g_{N})$ be an
anti-invariant Riemannian submersion such that $(\ker F_{\ast })^{\bot
}=\phi \ker F_{\ast }\oplus \{\xi \},$ where $M(\phi ,\xi ,\eta ,g_{M})$ is
a cosymplectic manifold and $(N,g_{N})$ is a Riemannian manifold. Then $F$
is a totally geodesic map if and only if%
\begin{equation}
\mathcal{T}_{W}\phi V=0,\text{ \ \ }\forall \text{ }W,\text{ }V\in \Gamma
(\ker F_{\ast })  \label{S1}
\end{equation}%
and%
\begin{equation}
\mathcal{A}_{X}\phi W=0,\text{ \ }\forall X\in \Gamma ((\ker F_{\ast
})^{\bot }),\forall \text{ }W\in \Gamma (\ker F_{\ast }).\text{\ }
\label{S2}
\end{equation}
\end{theorem}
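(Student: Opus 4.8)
The plan is to follow the same scheme as the proof of Theorem 7, adapting it to the case where $\xi $ is horizontal. Recall that $F$ is totally geodesic precisely when $\nabla F_{\ast }=0$, i.e. when $(\nabla F_{\ast })(E_{1},E_{2})=0$ for all $E_{1},E_{2}\in \Gamma (TM)$. Since the second fundamental form $\nabla F_{\ast }$ is symmetric and tensorial, it suffices to test it on the three types of pairs coming from the splitting $TM=\ker F_{\ast }\oplus (\ker F_{\ast })^{\bot }$. The horizontal-horizontal case is immediate: by (\ref{5a}) we have $(\nabla F_{\ast })(X,Y)=0$ for all $X,Y\in \Gamma ((\ker F_{\ast })^{\bot })$, so nothing is required there. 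Thus everything reduces to the vertical-vertical pairs and the mixed pairs.

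For $W,V\in \Gamma (\ker F_{\ast })$ I would compute $(\nabla F_{\ast })(W,V)=-F_{\ast }(\nabla _{W}V)$ (using $F_{\ast }V=0$) and rewrite $\nabla _{W}V$ through $\phi $. Writing $E=-\phi ^{2}E+\eta (E)\xi $ from (\ref{fi}) and using the cosymplectic identity (\ref{nambla}) to pass $\phi $ through $\nabla $, together with the decomposition (\ref{2}) of $\nabla _{W}\phi V$ into its $\mathcal{H}$ and $\mathcal{T}$ parts, should yield $(\nabla F_{\ast })(W,V)=F_{\ast }(\phi \mathcal{T}_{W}\phi V)$, exactly as in (\ref{BE3}). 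The mixed case is analogous: for $X\in \Gamma ((\ker F_{\ast })^{\bot })$ and $W\in \Gamma (\ker F_{\ast })$, the decomposition (\ref{4}) of $\nabla _{X}\phi W$ together with (\ref{fi}) and (\ref{nambla}) should give $(\nabla F_{\ast })(X,W)=F_{\ast }(\phi \mathcal{A}_{X}\phi W)$, the analogue of (\ref{BE4}).

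The step that needs the most care, and where this proof genuinely differs from that of Theorem 7, is the treatment of the correction term $\eta (E)\xi $ in $\phi ^{2}E=-E+\eta (E)\xi $. In Theorem 7 the vector $\xi $ was vertical, hence annihilated by $F_{\ast }$, and the term disappeared automatically. Here $\xi $ is horizontal, so $F_{\ast }\xi \neq 0$ in general and one cannot discard it so cheaply. The remedy is to observe that the relevant coefficient vanishes on its own: for vertical $V$ and any $E$, $\eta (\nabla _{E}V)=E(\eta (V))-g_{M}(V,\nabla _{E}\xi )=0$, because $\eta (V)=g_{M}(V,\xi )=0$ (as $\xi $ is horizontal and $V$ vertical) and $\nabla _{E}\xi =0$ by (\ref{xzeta}). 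Likewise, when I apply $\phi $ to a horizontal vector expanded in $(\ker F_{\ast })^{\bot }=\phi \ker F_{\ast }\oplus \{\xi \}$, the $\xi $-component is killed by $\phi \xi =0$ and the $\phi \ker F_{\ast }$-component lands back in $\ker F_{\ast }$ (again using $\eta |_{\ker F_{\ast }}=0$), so it is annihilated by $F_{\ast }$; this is what isolates the single surviving term $\phi \mathcal{T}_{W}\phi V$ (resp. $\phi \mathcal{A}_{X}\phi W$).

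Finally, to conclude, I would note that $F_{\ast }$ is a linear isomorphism on horizontal vectors and that $\phi $ restricted to $\ker F_{\ast }$ is injective (its kernel is spanned by $\xi $, which is now horizontal). Hence $F_{\ast }(\phi \mathcal{T}_{W}\phi V)=0$ is equivalent to $\mathcal{T}_{W}\phi V=0$, and $F_{\ast }(\phi \mathcal{A}_{X}\phi W)=0$ is equivalent to $\mathcal{A}_{X}\phi W=0$. Combining this with the skew-symmetry relations (\ref{4b}) and (\ref{4c}) and the vanishing of the horizontal-horizontal part, $\nabla F_{\ast }=0$ holds if and only if both (\ref{S1}) and (\ref{S2}) are satisfied, which proves the theorem.
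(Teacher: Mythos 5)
Your proposal is correct and follows essentially the same route as the paper's own proof: reduce to the three block types via (\ref{5a}), derive $(\nabla F_{\ast })(W,V)=F_{\ast }(\phi \mathcal{T}_{W}\phi V)$ and $(\nabla F_{\ast })(X,W)=F_{\ast }(\phi \mathcal{A}_{X}\phi W)$ from (\ref{fi}), (\ref{nambla}), (\ref{2}) and (\ref{4}), then use injectivity of $F_{\ast }$ on horizontal vectors and of $\phi $ on $\ker F_{\ast }$. Your explicit verification that $\eta (\nabla _{E}V)=0$ (via (\ref{xzeta})) and that $\phi $ maps $(\ker F_{\ast })^{\bot }=\phi \ker F_{\ast }\oplus \{\xi \}$ back into $\ker F_{\ast }$ is exactly the detail the paper leaves implicit when $\xi $ is horizontal, so no gap remains.
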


\begin{proof}
First of all, we recall that the second fundamental form of a Riemannian
submersion satisfies (\ref{5a}). For $W,$ $V\in \Gamma (\ker F_{\ast }),$ by
using (\ref{2}), (\ref{5}), (\ref{fi}) and (\ref{nambla}), we get%
\begin{equation}
(\nabla F_{\ast })(W,V)=F_{\ast }(\phi \mathcal{T}_{W}\phi V).  \label{S3}
\end{equation}%
On the other hand by using (\ref{5}) and (\ref{nambla}) we have%
\begin{equation*}
(\nabla F_{\ast })(X,W)=F_{\ast }(\phi \nabla _{X}\phi W)
\end{equation*}%
for $X\in \Gamma ((\ker F_{\ast })^{\bot }).$ Then from (\ref{4}) and (\ref%
{fi}), we obtain 
\begin{equation}
(\nabla F_{\ast })(X,W)=F_{\ast }(\phi \mathcal{A}_{X}\phi W).  \label{S4}
\end{equation}%
Since $\phi $ is non-singular, using (\ref{4b}) and (\ref{4c}) proof comes
from (\ref{5a}), (\ref{S3}) and (\ref{S4}).
\end{proof}

Finally, we give a necessary and sufficient condition for an anti-invariant
Riemannian submersion such that $(\ker F_{\ast })^{\bot }=\phi \ker F_{\ast
}\oplus \{\xi \}$ to be harmonic.

\begin{theorem}
Let $F:M(\phi ,\xi ,\eta ,g_{M})\rightarrow $ $(N,g_{N})$ be an
anti-invariant Riemannian submersion such that $(\ker F_{\ast })^{\bot
}=\phi \ker F_{\ast }\oplus \{\xi \},$ where $M(\phi ,\xi ,\eta ,g_{M})$ is
a cosymplectic manifold and $(N,g_{N})$ is a Riemannian manifold. Then $F$
is harmonic if and only if Trace$\phi \mathcal{T}_{V}=0$ for $V\in \Gamma
(\ker F_{\ast }).$
\end{theorem}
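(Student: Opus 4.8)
The plan is to imitate the proof of Theorem 6 almost verbatim, the only genuinely new issue being that in the present setting $\phi(\ker F_\ast)$ no longer exhausts $(\ker F_\ast)^\bot$. First I would invoke \cite{EJ}: a Riemannian submersion is harmonic precisely when its fibres are minimal, i.e. when $\sum_{i=1}^{k}\mathcal{T}_{e_i}e_i=0$ for an orthonormal frame $\{e_1,\dots ,e_k\}$ of $\ker F_\ast$, with $k=\dim\ker F_\ast$. Here $\xi$ is horizontal, hence not among the $e_i$; equivalently, harmonicity amounts to the vanishing of the mean curvature vector $H=\frac{1}{k}\sum_i\mathcal{T}_{e_i}e_i$ of the fibres.

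The first computational step is to re-establish the intertwining relation $\mathcal{T}_V\phi W=\phi\mathcal{T}_V W$ for $V,W\in\Gamma(\ker F_\ast)$, exactly as in (\ref{S15}). Writing $\nabla_V(\phi W)=\phi\nabla_V W$ from (\ref{nambla}) and decomposing both sides via (\ref{1}) and (\ref{2}), one matches vertical and horizontal parts; since $\eta(U)=g_M(U,\xi)=0$ for vertical $U$ (because $\xi$ is horizontal), the splitting goes through unchanged and the identity follows. With this in hand I would compute the fibre-trace $\sum_i g_M(\mathcal{T}_{e_i}\phi e_i,V)$: using the intertwining relation and the skew-symmetry of $\phi$ (which itself follows from (\ref{metric}) and (\ref{fi})), then the skew-symmetry (\ref{4b}) of $\mathcal{T}$ and the symmetry (\ref{TUW}), one reaches $\sum_i g_M(\phi e_i,\mathcal{T}_V e_i)=\sum_i g_M(\mathcal{T}_{e_i}e_i,\phi V)$, that is, up to sign, $\mathrm{Trace}\,\phi\mathcal{T}_V=-g_M\!\left(\sum_i\mathcal{T}_{e_i}e_i,\phi V\right)$.

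The main obstacle — and the only place the horizontal-$\xi$ hypothesis really bites — is the converse implication. If $F$ is harmonic then $H=0$ and the right-hand side vanishes, so $\mathrm{Trace}\,\phi\mathcal{T}_V=0$ for all $V$; this direction is immediate. Conversely, $\mathrm{Trace}\,\phi\mathcal{T}_V=0$ for every $V\in\Gamma(\ker F_\ast)$ only forces $g_M(H,\phi V)=0$, i.e. $H\perp\phi(\ker F_\ast)$. In the setting of Theorem 6 this already gives $H=0$ since $\phi(\ker F_\ast)=(\ker F_\ast)^\bot$, but here $(\ker F_\ast)^\bot=\phi(\ker F_\ast)\oplus\{\xi\}$, so I must separately rule out a $\xi$-component of $H$. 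This is supplied by (\ref{IKE2}): because $\mathcal{T}_U\xi=0$, the skew-symmetry (\ref{4b}) gives $g_M(\mathcal{T}_{e_i}e_i,\xi)=-g_M(\mathcal{T}_{e_i}\xi,e_i)=0$, so $H\perp\xi$ as well. Combining $H\perp\phi(\ker F_\ast)$ with $H\perp\xi$ shows $H$ is orthogonal to all of $(\ker F_\ast)^\bot$, while $H$ is itself horizontal; hence $H=0$ and $F$ is harmonic, completing the equivalence.
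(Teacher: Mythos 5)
Your proposal is correct and follows essentially the same route as the paper: the Eells--Sampson reduction to minimal fibres, the intertwining relation $\mathcal{T}_V\phi W=\phi\mathcal{T}_VW$, and the trace manipulation via (\ref{4b}) and (\ref{TUW}). In fact you are more careful than the paper at the one point where the horizontal-$\xi$ hypothesis matters: the published proof is copied verbatim from Theorem 6 and ends with ``proof comes from (\ref{metric})'', which silently assumes $\sum_i\mathcal{T}_{e_i}e_i$ lies in $\phi(\ker F_\ast)$; your explicit observation that $H\perp\xi$ follows from $\mathcal{T}_U\xi=0$ and (\ref{4b}) is exactly the step needed to close that gap.
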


\begin{proof}
From \cite{EJ} we know that $F$ is harmonic if and only if $F$ has minimal
fibres. Thus $F$ is harmonic if and only if $\sum\limits_{i=1}^{k}\mathcal{T}%
_{e_{i}}e_{i}=0,$ where $k$ is dimension of $\ker F_{\ast }$ . On the other
hand, from (\ref{1}), (\ref{2}) and (\ref{nambla}), we get%
\begin{equation}
\mathcal{T}_{V}\phi W=\phi \mathcal{T}_{V}W  \label{S5}
\end{equation}%
for any $W,$ $V\in \Gamma (\ker F_{\ast }).$ Using (\ref{S5}), we get 
\begin{equation*}
\sum\limits_{i=1}^{k}g_{M}(\mathcal{T}_{e_{i}}\phi
e_{i},V)=-\sum\limits_{i=1}^{k}g_{M}(\mathcal{T}_{e_{i}}e_{i},\phi V)
\end{equation*}%
for any $V\in \Gamma (\ker F_{\ast }).$ (\ref{4b}) implies that%
\begin{equation*}
\sum\limits_{i=1}^{k}g_{M}(\phi e_{i},\mathcal{T}_{e_{i}}V)=\sum%
\limits_{i=1}^{k}g_{M}(\mathcal{T}_{e_{i}}e_{i},\phi V).
\end{equation*}%
Then, using (\ref{TUW}) we have%
\begin{equation*}
\sum\limits_{i=1}^{k}g_{M}(\phi e_{i},\mathcal{T}_{Ve_{i}})=\sum%
\limits_{i=1}^{k}g_{M}(\mathcal{T}_{e_{i}}e_{i},\phi V).
\end{equation*}%
Hence, proof comes from (\ref{metric}).
\end{proof}

From Theorem 13 and Theorem 14 we have following Theorem.

\begin{theorem}
Let $F$ be an anti-invariant Riemannian submersion from a cosymplectic
manifold $M(\phi ,\xi ,\eta ,g_{M})$ to a Riemannian manifold $(N,g_{N}).$
Then $M$ is a locally product manifold if and only if%
\begin{equation*}
g_{N}((\nabla F_{\ast })(X,\phi Y),F_{\ast }\phi V)=-g_{M}(CY,\phi \mathcal{A%
}_{X}V)
\end{equation*}%
and%
\begin{equation*}
g_{N}((\nabla F_{\ast })(V,\phi X),F_{\ast }\phi W)=0
\end{equation*}%
for $W,$ $V\in \Gamma (\ker F_{\ast }),$ $X,$ $Y\in \Gamma ((\ker F_{\ast
})^{\bot }).$
\end{theorem}

Using Corollary 5 and Corollary 6, we get following Theorem.

\begin{theorem}
Let $F:M(\phi ,\xi ,\eta ,g_{M})\rightarrow $ $(N,g_{N})$ be an
anti-invariant Riemannian submersion such that $(\ker F_{\ast })^{\bot
}=\phi \ker F_{\ast }\oplus \{\xi \},$ where $M(\phi ,\xi ,\eta ,g_{M})$ is
a cosymplectic manifold and $(N,g_{N})$ is a Riemannian manifold. Then $M$
is a locally product manifold if and only if \ $\mathcal{A}_{X}\phi Y=0$ and 
$\mathcal{T}_{V}\phi W=0$ for $W,$ $V\in \Gamma (\ker F_{\ast }),$ $X,$ $%
Y\in \Gamma ((\ker F_{\ast })^{\bot }).$
\end{theorem}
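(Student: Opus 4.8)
The plan is to reduce the statement to the product criterion recalled at the end of Section 2 (due to \cite{PON}): a Riemannian metric on $M=M_{1}\times M_{2}$ whose canonical foliations intersect perpendicularly is a product metric if and only if both foliations are totally geodesic. The two complementary, mutually orthogonal distributions relevant here are the vertical distribution $\ker F_{\ast }$ and the horizontal distribution $(\ker F_{\ast })^{\bot }$, whose integral manifolds are the fibers and the horizontal leaves. Hence $M$ is a locally product manifold if and only if each of these two distributions defines a totally geodesic foliation on $M$.

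First I would record this reduction explicitly: by \cite{PON}, being a locally product manifold is equivalent to the conjunction of the two statements ``$(\ker F_{\ast })^{\bot }$ defines a totally geodesic foliation on $M$'' and ``$\ker F_{\ast }$ defines a totally geodesic foliation on $M$''. Then I would translate each of these into a tensorial condition by invoking the corollaries already proved under the present hypothesis $(\ker F_{\ast })^{\bot }=\phi \ker F_{\ast }\oplus \{\xi \}$. Corollary 5 yields that $(\ker F_{\ast })^{\bot }$ defines a totally geodesic foliation if and only if $\mathcal{A}_{X}\phi Y=0$ for all $X,Y\in \Gamma ((\ker F_{\ast })^{\bot })$, while Corollary 6 yields that $\ker F_{\ast }$ defines a totally geodesic foliation if and only if $\mathcal{T}_{V}\phi W=0$ for all $V,W\in \Gamma (\ker F_{\ast })$. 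Combining the three equivalences gives the claim.

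I do not expect a genuine obstacle, since the argument is essentially a bookkeeping assembly of results established earlier. The one point deserving a line of care is checking that the hypotheses of the product criterion are met: the two distributions are orthogonal and together span $TM$ by construction, $\ker F_{\ast }$ is automatically integrable (its leaves are the fibers $F^{-1}(q)$), and the integrability of $(\ker F_{\ast })^{\bot }$ needed to speak of its leaves is already built into the phrase ``totally geodesic foliation'' appearing in Corollary 5, hence guaranteed the moment $\mathcal{A}_{X}\phi Y=0$ holds. With these verifications in place, the stated equivalence follows at once.
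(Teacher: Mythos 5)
Your proposal is correct and follows exactly the paper's route: the paper proves this theorem by simply invoking Corollary 5 and Corollary 6 together with the Ponge--Reckziegel product criterion recalled at the end of Section 2, which is precisely your assembly. Your extra remarks on verifying the hypotheses of the product criterion (orthogonality, integrability of the fibers, and integrability of the horizontal distribution being subsumed in the phrase ``totally geodesic foliation'') are a sensible addition that the paper leaves implicit.
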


\end{document}